\DeclareMathOperator*{\vol}{vol}
\newcommand{\dotp}[2]{\left\langle #1, #2\right\rangle}
\newcommand{\wh}{\widehat}
\newcommand{\tr}{\mathrm{tr}}
\newcommand{\eps}{\varepsilon}
\newcommand{\mb}{\mathbb}
\def\l{\left}
\def\r{\right}
\newcommand\m{\mathcal}
\newcommand{\EHS}[3]{P^X_{#1}H\l(#2,#3\r)} 
\newcommand{\PHS}[2]{PH\l(#1,#2\r)} 
\newcommand{\HS}[2]{H\l(#1,#2\r)} 
\newcommand{\ED}[2]{D_{#1}\l(#2\r)} 
\newtheorem{theorem}{Theorem}[section]
\newtheorem{corollary}{Corollary}[section]
\newtheorem{lemma}{Lemma}[section]
\newtheorem{remark}{Remark}[section]
\title{Improved performance guarantees for Tukey's median}
\author{Stanislav Minsker}
\address{Department of Mathematics, University of Southern California}
\email{minsker@usc.edu}
\thanks{S. Minsker and Y. Shen acknowledge support by the National Science Foundation grant DMS CAREER-2045068.}
\author{Yinan Shen}
\address{Department of Mathematics, University of Southern California}
\email{yinanshe@usc.edu}
\begin{document}

\mathtoolsset{showonlyrefs=true}

\begin{abstract}
Is there a natural way to order data in dimension greater than one? The approach based on the notion of data depth, often associated with John Tukey, is among the most popular. Tukey's depth has found applications in robust statistics, graph theory, and the study of elections and social choice. 
We present improved performance guarantees for empirical Tukey's median, a deepest point associated with a given sample, when the data-generating distribution is elliptically symmetric and possibly anisotropic. Some of our results remain valid in the wider class of affine equivariant estimators. 
As a corollary of our bounds, we show that the typical diameter of the set of all empirical Tukey's medians scales like $o(n^{-1/2})$ where $n$ is the sample size. Moreover, when the data follow the bivariate normal distribution, we prove that with high probability, the diameter is of order $O(n^{-3/4}\log^{1/2}(n))$. 
On the technical side, we show how affine equivariance can be leveraged to improve concentration bounds; moreover, we develop sharp strong approximation results for empirical processes indexed by halfspaces that could be of independent interest.
\end{abstract}
\maketitle

\section{Introduction}

The fundamental notions of order statistics and ranks form a basis of many inferential procedures. They are the backbone of methods in robust statistics, a field that studies stability of algorithms under perturbations of data \citep{robuststat}. As the Euclidean space of dimension 2 and higher lacks the canonical order, the ``canonical'' versions of order statistics and ranks are difficult to construct. 
Many useful versions have been proposed over the years: examples include the componentwise ranks \citep{hodges1955bivariate}, spatial ranks and quantiles \citep{brown1983statistical,koltchinskii1997m}, Mahalanobis ranks \citep{hallin2002optimal}, and recently introduced Monge-Kantorovich ranks and quantiles \citep{chernozhukov2017monge}. 
In this paper, we will focus on the notion of \emph{data depth} that gives rise to the depth-based ranks and quantiles, see \cite{serfling2002quantile} for an overview. The idea of depth and the associated ordering with respect to a probability measure $P$ on $\mb R$ goes back to \cite{hotelling1929stability}: given $z\in \mb R$, its \emph{depth} is defined as the minimum among $P(-\infty,z]$ and $P[z,\infty)$. \cite{hodges1955bivariate} and \cite{Tukey1975MathematicsAT} extended this idea to $\mb R^2$, and finally 
\cite{donoho1982breakdown}, \cite{donoho1992breakdown} formalized the general notion of depth of a point $z\in\mb R^d$ with respect to a probability measure $P$ as the infimum of all univariate depth evaluated over projections of $P$ on the lines passing through $z$. Equivalently, Tukey's (or halfspace) depth of $z$ is 
\begin{equation}
    \label{eq:depth}
D_P(z)=\inf_{u\in S^{d-1}} P\l(H(z,u)\r),
\end{equation}
where $\HS{z}{u}=\{x\in \mathbb{R}^d:\ x^T u \geq z^T u\}$  is the halfspace passing through $z$ in direction $u$, and $S^{d-1}$ is the unit sphere in $\mb R^d$ with respect to the Euclidean norm $\|\cdot\|_2$. 
Let us remark that the notion of halfspace depth and its applications appear in other areas, for example in  graph theory \citep{small1997multidimensional,cerdeira2021centrality}, in convex geometry in relation to the convex floating bodies \citep{nagy2019data}, and the theory of social choice in economics under the name of the ``min-max majority'' \citep{caplin198864,nehring2023multi}. We refer the reader to the habilitation thesis of \cite{nagy2022halfspace} for an excellent overview of the history and recent developments in the theoretical and algorithmic aspects of Tukey's depth.

If $P$ is such that $P(\partial H) = 0$ for the boundary $\partial H$ of any closed halfspace $H$, then the infumum in \eqref{eq:depth} is attained \citep[][Proposition 4.5]{masse2004asymptotics}. For example, this is the case for distributions that are absolutely continuous with respect to the Lebesgue measure. 
Of particular interest are the points of maximal depth, that is, 
\[
\mu_\ast:=\mu_\ast(P)\in \arg\max D_P(z). 
\]
The set of deepest points with respect to $P$ will be referred to as Tukey's median set, or simply Tukey's set, while the barycenter of this set is Tukey's median; when $d=1$, it coincides with the standard median. According to Propositions 7 and 9 in \cite{rousseeuw1999depth}, Tukey's median always exists, and $\alpha_\ast:=D_P(\mu_\ast)\geq \frac{1}{d+1}$ for any $P$. 
For distributions possessing symmetry properties, $\alpha_\ast$ can be much larger. For example, if $P$ is halfspace-symmetric\footnote{The distribution of a random vector $X$ is halfspace-symmetric with respect to some $\mu\in \mb R^d$ if $P(H(\mu,u))\geq \frac12$ for all $u\in S^{d-1}$ \citep{zuo2000general}.} and absolutely continuous with respect to the Lebesgue measure, then its center of symmetry coincides with Tukey's median, and its depth $\alpha_\ast$ equals $\frac12$.   
The multivariate normal distribution, and more generally all elliptically symmetric distributions, are known to satisfy this property. Whenever $\alpha_\ast$ is large, Tukey's median also has a high breakdown point, and therefore features strong robustness properties -- for instance, its breakdown point equals $\frac13$ for halfspace-symmetric distributions, see Proposition 1 in \cite{chen1995robustness} for the precise statement. The upper-level sets of halfspace depth, defined as
\[
R_P(\alpha):=\{z\in \mb R^d: \ D_P(z)\geq \alpha_\ast - \alpha\}, \ \alpha < \alpha_\ast,
\]
are convex and compact. These sets are often called the \emph{central regions} \citep{serfling2002quantile} and their boundaries -- the quantile surfaces or depth contours \citep{liu1999multivariate}. 

Let $X_1,\ldots,X_n\in \mathbb R^d$ be a sequence of independent copies of a random vector $X$ with distribution $P$. The empirical measure $P^X_n$ is a discrete measure with atoms $X_1,\ldots,X_n$ having weight $1/n$ each. In particular, for any Borel measurable set $A\subseteq \mb R^d$, 
\[
P^X_n(A):=\frac{1}{n}\sum_{j=1}^n I_A(X_j), \text{ where } I_A(z) = \begin{cases}
    1, & z\in A, \\
    0, & z\notin A.
\end{cases}
\]
The \emph{empirical depth} corresponding to $P^X_n$ will be denoted $D_n(z)$, and it yields a natural way to order points in a sample, giving rise to the depth-based ranks. It also admits the following convenient geometric interpretation: $n D_n(z)$ equals the cardinality of a smallest subset 
$S\subset\{X_1,\ldots,X_n\}$ such that $z$ is not in the convex hull of $\{X_1,\ldots,X_n\} \setminus S$, with the convention that the cardinality of the empty set is $0$.
The empirical version of Tukey's set is defined as the set  
\[
\arg\max_{z\in \mathbb R^d}\ED{n}{z}
\]
of points with maximal empirical depth. 
The barycenter of this set is known as the empirical Tukey's median, and it will be denoted $\wh\mu_n$. The definition is illustrated in Figure \ref{fig:empiricalmedian} that displays the (empirical) depth contours, upper-level sets (central regions) and the Tukey's median  corresponding to a sample of size $n=100$ from the isotropic bivariate Gaussian distribution as well as the isotropic bivariate Student's t distribution with $2.1$ degrees freedom. 
\begin{figure}[ht]
    \centering
    \begin{subfigure}[b]{0.45\textwidth}
        \centering
        \includegraphics[width=\textwidth]{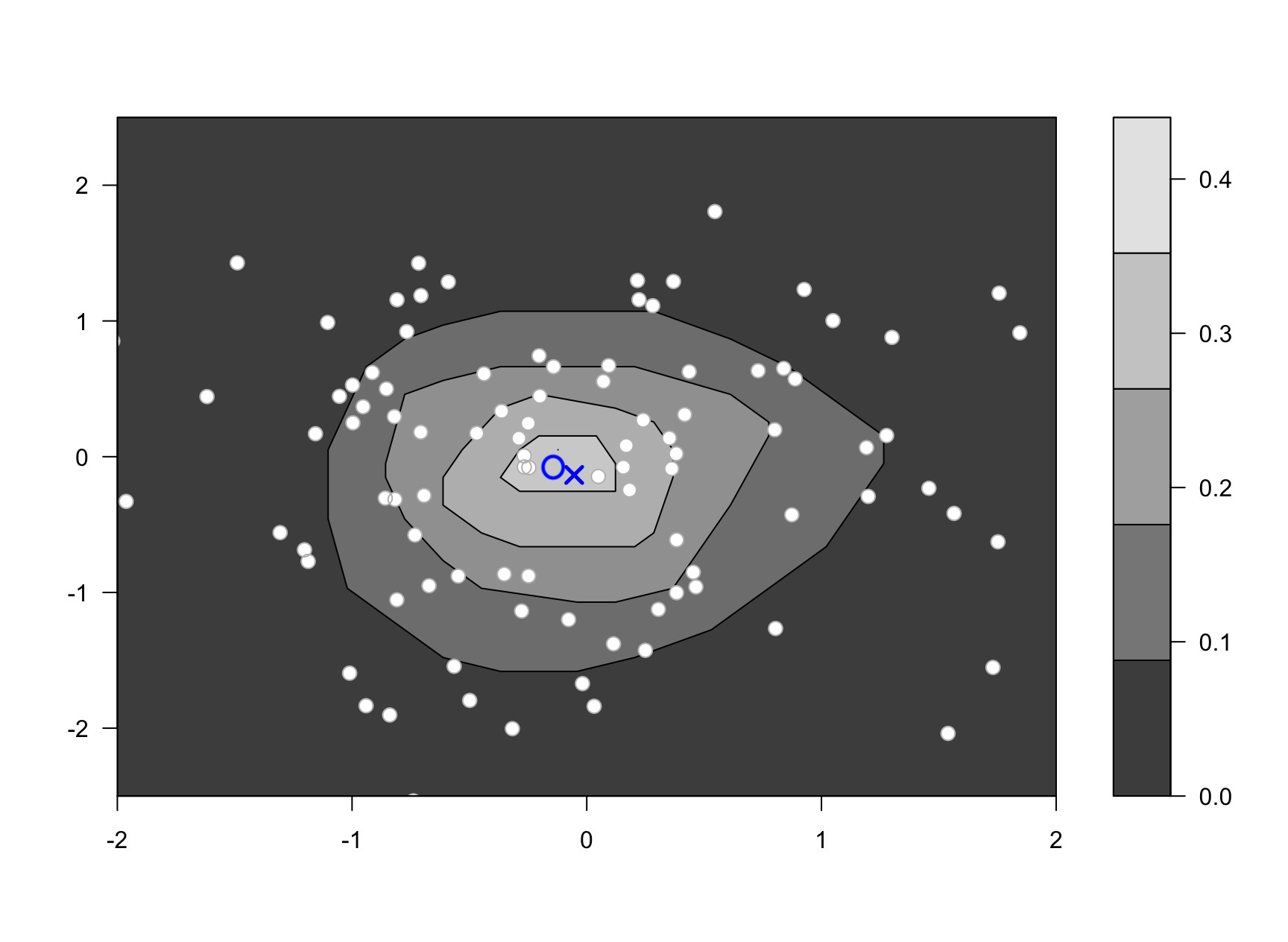}
        \caption{$n=100$, standard Gaussian}
    \end{subfigure}
    \hfill
    \begin{subfigure}[b]{0.45\textwidth}
        \centering
        \includegraphics[width=\textwidth]{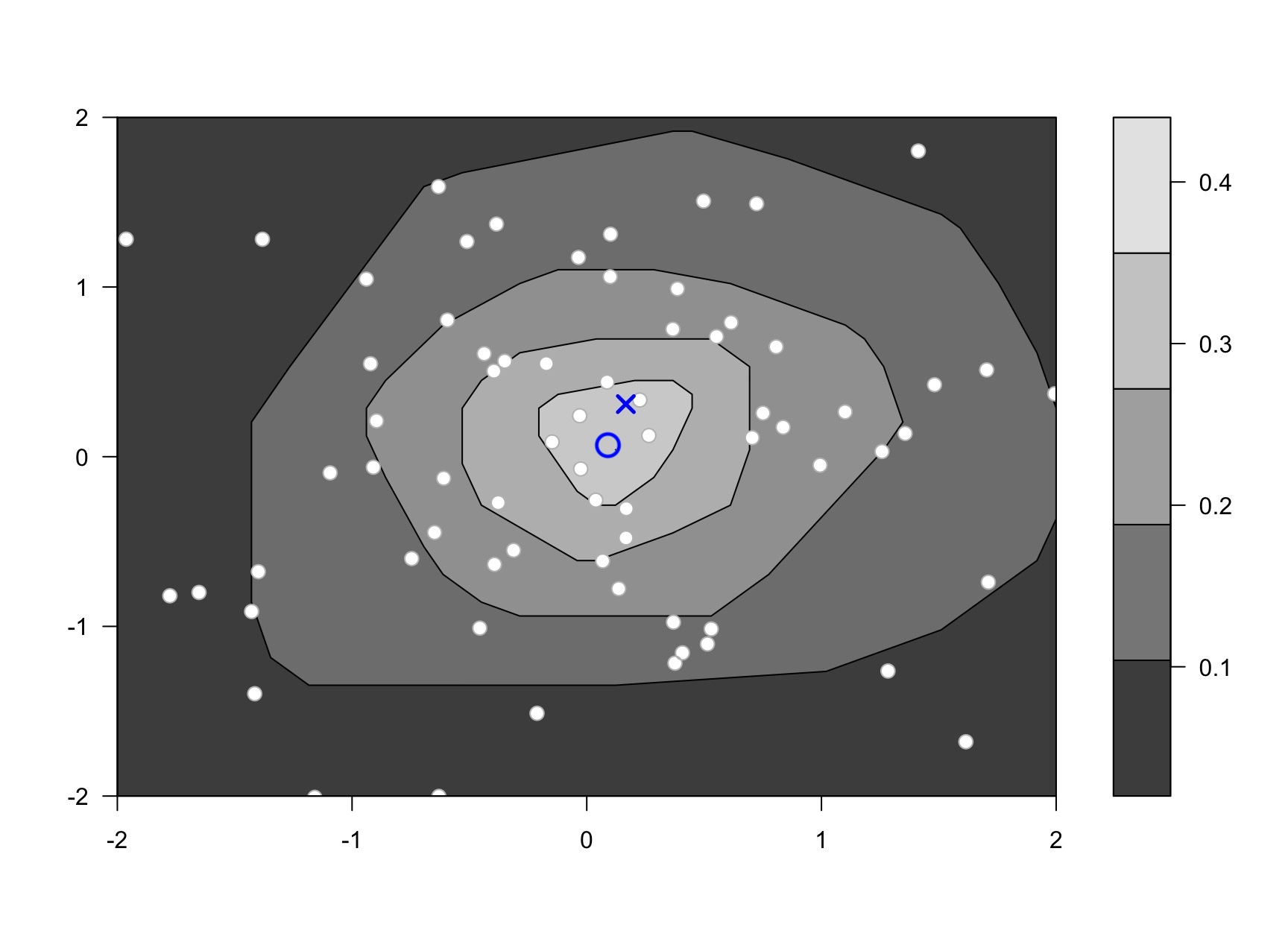}
        \caption{$n=100$, Student's t with $\nu=2.1$ d.f.}
    \end{subfigure}
    \caption{Depth contours: lighter-colored regions correspond to higher depth. Empirical Tukey's median is marked with a circle and the sample mean -- with a cross}. 
    \label{fig:empiricalmedian}
\end{figure}

\begin{figure}[ht]
    \centering
    \begin{subfigure}[b]{0.45\textwidth}
        \centering
        \includegraphics[width=\textwidth]{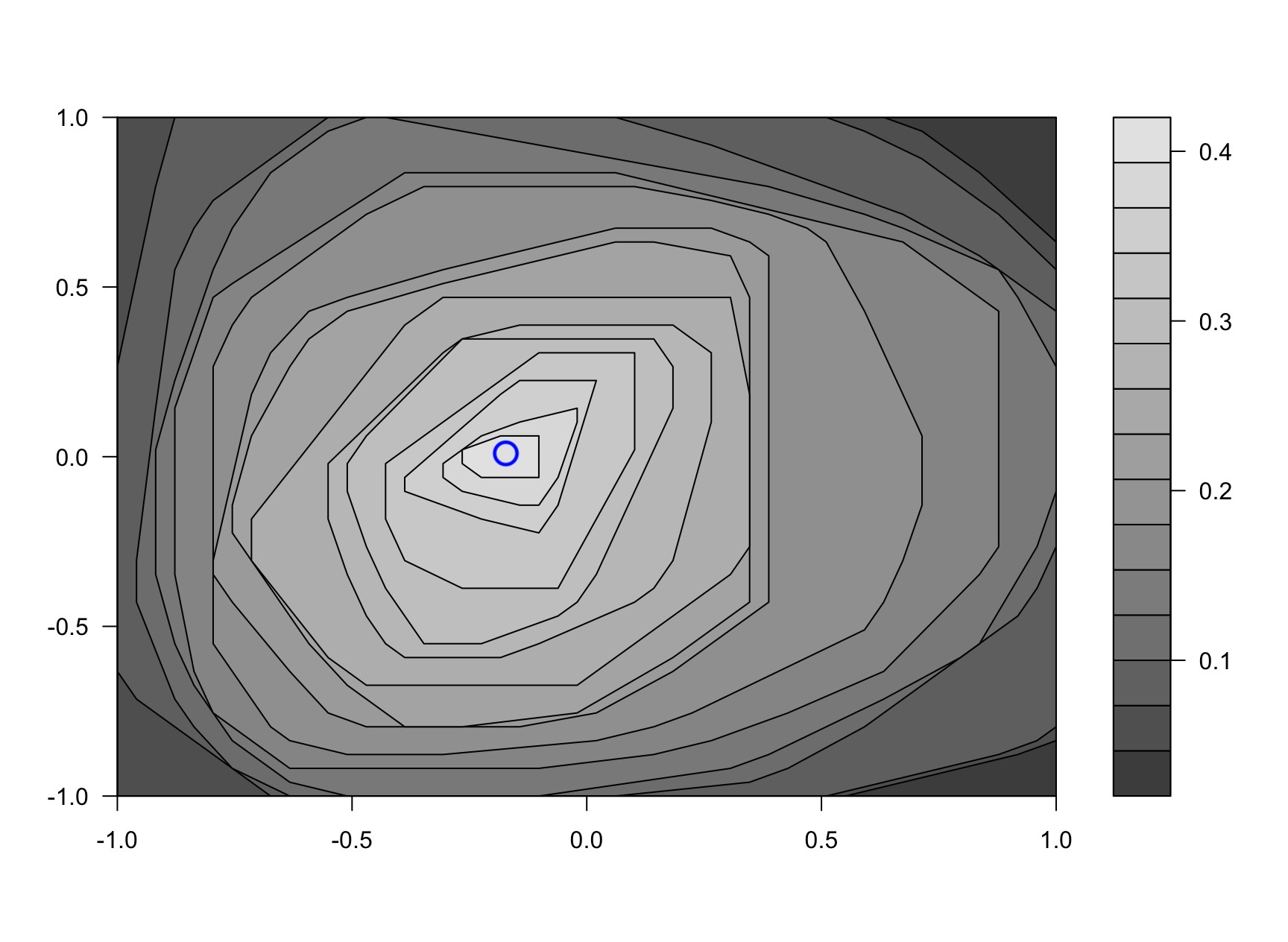}
        \caption{$n=50$, standard Gaussian}
    \end{subfigure}
    \hfill
    \begin{subfigure}[b]{0.45\textwidth}
        \centering
        \includegraphics[width=\textwidth]{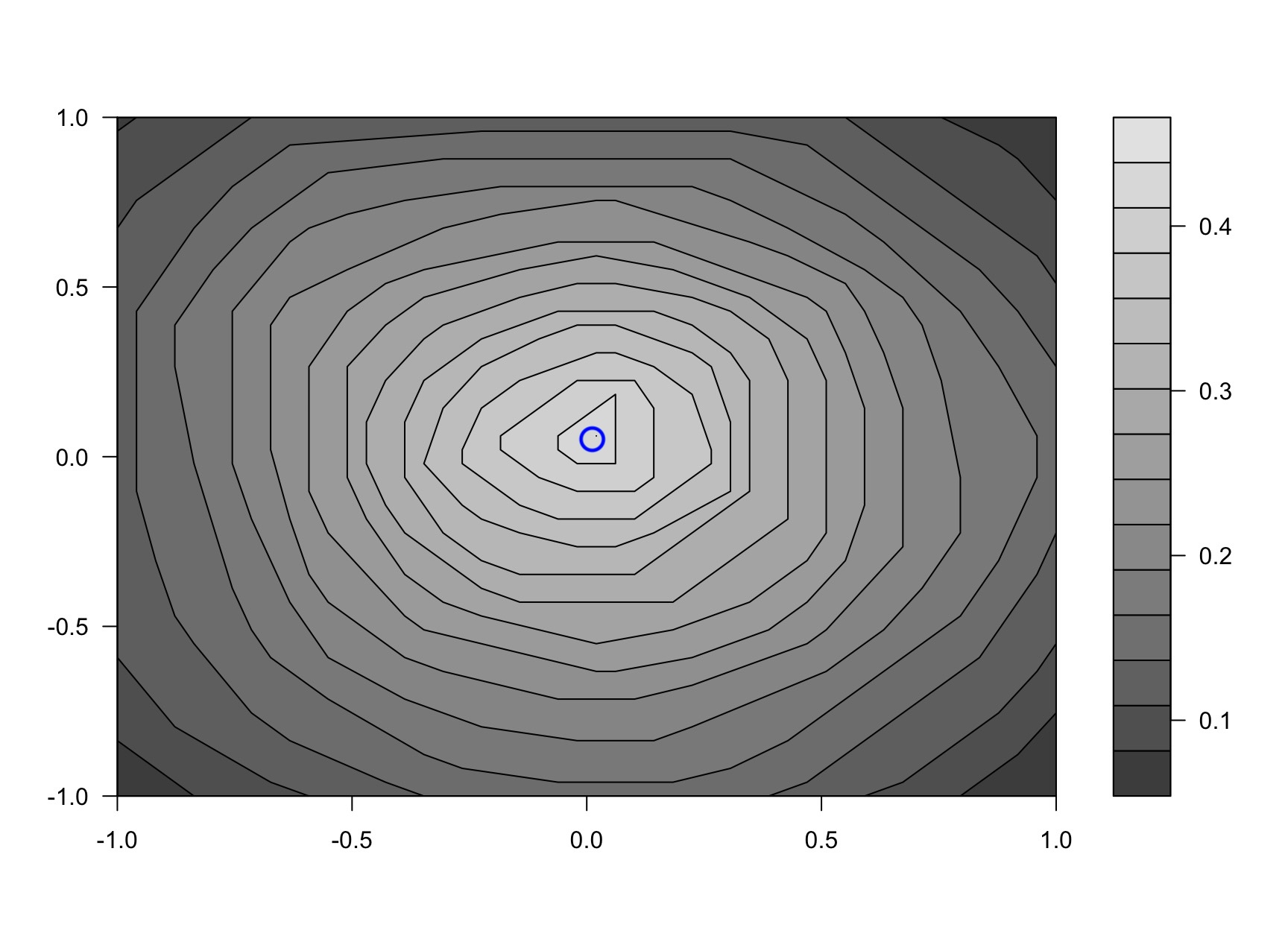}
        \caption{$n=500$, standard Gaussian}
    \end{subfigure}
    \caption{Empirical depth contours approach their population limit. The blue circle denotes Tukey's median.}
    \label{fig:contourslimit}
\end{figure}
\noindent Figure \ref{fig:contourslimit} shows that the empirical contours become smoother and approach their population counterparts as the sample size grows. Even when $\mu_\ast$ is unique, the Tukey's set can have a non-empty interior. For instance, if $d=3$, then the set $\arg\max_{z\in \mathbb R^d}\ED{n}{z}$ can be a single point, an interval, or a convex polytope, see Example 3.1 in \cite{pokorny2024another}. 
This raises a natural question: \textit{what is the typical diameter of this set?} 
In the univariate case ($d=1$), the sample median is unique when $n$ is odd, and when $n$ is even, the diameter equals $L_n = \l|X_{\l(\frac{n}{2}+1 \r)} - X_{\l(\frac{n}{2}\r)}\r|$. Under mild assumptions and using the classical results on order statistics \citep{renyi1953theory}, the expected value of $L_n$ can be shown to be of order $c/n$ for a constant $c$ that depends on the distribution. To the best of our knowledge, the question was open for $d\geq 2$. 
We make a step towards answering it and prove (see Corollary \ref{corollary:tukey-set}) that with high probability, diameter of empirical Tukey's set decays faster than $n^{-1/2}$. When $d=2$, we obtain sharper estimates and show that the diameter is of order $n^{-3/4}\log^{1/2}(n)$. These results follow from the general upper (Theorem \ref{th:diameter}) and lower (Theorem \ref{th:depth:difference}) bounds for the diameter of the sets of ``deep points'' that are proven to be sharp in some cases.  

Among the attractive properties of the halfspace depth is the \emph{affine invariance}: for any affine map $z\mapsto T(z) = Mz+b$ such that $M\in \mb R^{d\times d}$ is non-singular and $b\in \mb R^d$, 
\[
D_{P\circ T^{-1}}(Mz + b) = D_P(z), \ \forall z\in \mb R^d,
\]
where $P\circ T^{-1}(A) = P\l( T^{-1}(A)\r)$. This implies that 
\[
\mu_\ast(P\circ T^{-1}) = M \mu_\ast(P) + b,
\]
so that Tukey's median ``respects'' affine transformations.
For example, if $X$ has mean $\mu$ and the covariance matrix $\Sigma = \mb E(X-\mu)(X-\mu)^T$ is non-degenerate, then 
\[
\wh\mu_n(X_1,\ldots,X_n) = \Sigma^{1/2}\wh\mu_n(Z_1,\ldots,Z_n) + \mu, 
\]
where $Z_j = \Sigma^{-1/2}(X_j - \mu)$ has centered isotropic distribution (i.e. the covariance of $Z_1$ is the identity matrix). Due to this property, existing literature that investigates the proximity between the population median $\mu$ and its data-dependent counterpart $\wh\mu_n$ often focuses on the estimation error measured in the Mahalanobis distance $\|\Sigma^{-1/2}(\wh\mu_n(X_1,\ldots,X_n) - \mu)\|_2$. Since $\Sigma$ is typically unknown, one may instead prefer to measure the error with respect to the usual Euclidean distance $\|\wh \mu_n - \mu\|_2$. 

In this paper, we make a step towards establishing optimality properties of Tukey's median. Specifically, we prove (see Theorem \ref{th:affine}) that the size of the error $\|\wh \mu_n - \mu\|_2$ is controlled by the \emph{effective rank} of $\Sigma$ defined as the ratio of the trace $\tr(\Sigma)$ and the spectral norm $\|\Sigma\|$, 
\[
r(\Sigma) = \frac{\tr(\Sigma)}{\|\Sigma\|},
\]
as opposed to the ambient dimension $d$. The latter can be much larger than $r(\Sigma)$ if the underlying distribution is concentrated near a low-dimensional subspace of $\mb R^d$. 
We then investigate robustness properties of Tukey's median and the effects of adversarial contamination on the error. While this question has been answered for isotropic distributions (equivalently, for the error with respect to the Mahalanobis norm, see \cite{chen2018robust}), the case of Euclidean distance poses new challenges. We give a partial answer to this question in Corollary \ref{corollary:rates}. The remaining open problems are discussed in Section \ref{section:discussion}.

\subsection{Notation}
\label{section:notation}

Given $z_1,z_2\in \mb R^d$, $\dotp{z_1}{z_2}$ will stand for the standard inner product, $\|\cdot\|_2$ -- for the associated Euclidean norm and $B(z,r)$ will denote the Euclidean ball of radius $r$ centered at $z$. For a matrix $A\in \mb R^{d\times d}$, $\tr(A)$ represents its trace and $\|A\|$ - its spectral (operator) norm. $I_d$ will denote the $d\times d$ identity matrix.

We will employ the standard small-o and big-O asymptotic notation throughout the paper. Given two sequences $\{a_j\}_{j\geq 1}, \ \{b_j\}_{j\geq 1}\subset \mb R_+$, we will say that $a_j \ll b_j$ if $a_j = o(b_j)$ as $j\to\infty$. 
We will write $o_P(1)$ to denote a sequence of random variables $\{\xi_j, \ j\geq 1\}$ that converge to $0$ in probability as $j\to\infty$ and $O_P(1)$ -- a sequence of random variables that is stochastically bounded. Indicator function of an event $\m E$ will be denoted $I_{\m E}$ or $I\{\m E\}$. Finally, $\Phi(t)$ will stand for the cumulative distribution function of the standard normal law on $\mb R$ and $\phi(t)$ -- for the associated probability density function. Additional notation will be introduced on demand.

\section{Main results}
\label{section:main}

Assume that a fraction of the data has been replaced by arbitrary values, that is, we observe $Y_1,\ldots,Y_n$ where $\frac{1}{n}\sum_{j=1}^n I\{Y_j\ne X_j\} = \eps$.\footnote{This is commonly referred to as the \emph{adversarial contamination framework}, see \cite{diakonikolas2023algorithmic}} 
In a scenario when $X_1,\ldots,X_n$ have multivariate normal distribution $N(\mu,\Sigma)$ such that $\Sigma$ is non-singular, \citet[][Theorem 2.1]{chen2018robust} showed that whenever $\eps<1/5$ and $\frac{d}{n}+\frac{t}{n}\leq c$, the inequalities
\begin{align}
\label{eq:bound:isotropic}
\|\Sigma^{-1/2}\l(\wh\mu_n(Y_1,\ldots,Y_n) - \mu \r)\|_2 &\leq C\l( \sqrt{\frac{d+t}{n}} + \eps \r) \text{ and } 
\\
\sup_{z\in \mb R^d}\l| {D_P(z) - D^Y_n(z)} \r|\ &\leq C_1\l( \sqrt{\frac{d+t}{n}} + \eps \r)
\end{align}
hold with probability at least $1-e^{-t}$, where $c,C,C_1>0$ are absolute constants and $D^Y_{n}(z) := \inf_{\|u\|_2=1} P^Y_{n} H(z,u)$. The key feature of these inequalities is the fact that it implies optimal dependence of the error $\|\Sigma^{-1/2}\l(\wh\mu_n(Y_1,\ldots,Y_n) - \mu \r)\|_2$ on the contamination proportion $\eps$; Tukey's median is among the first estimators known to possess this property. This robustness property of Tukey's median is illustrated in Figure \ref{fig:robustness} where it is juxtaposed with the sample mean. 

\begin{figure}[t]
    \centering
    \begin{subfigure}[b]{0.45\textwidth}
        \centering
        \includegraphics[width=\textwidth]{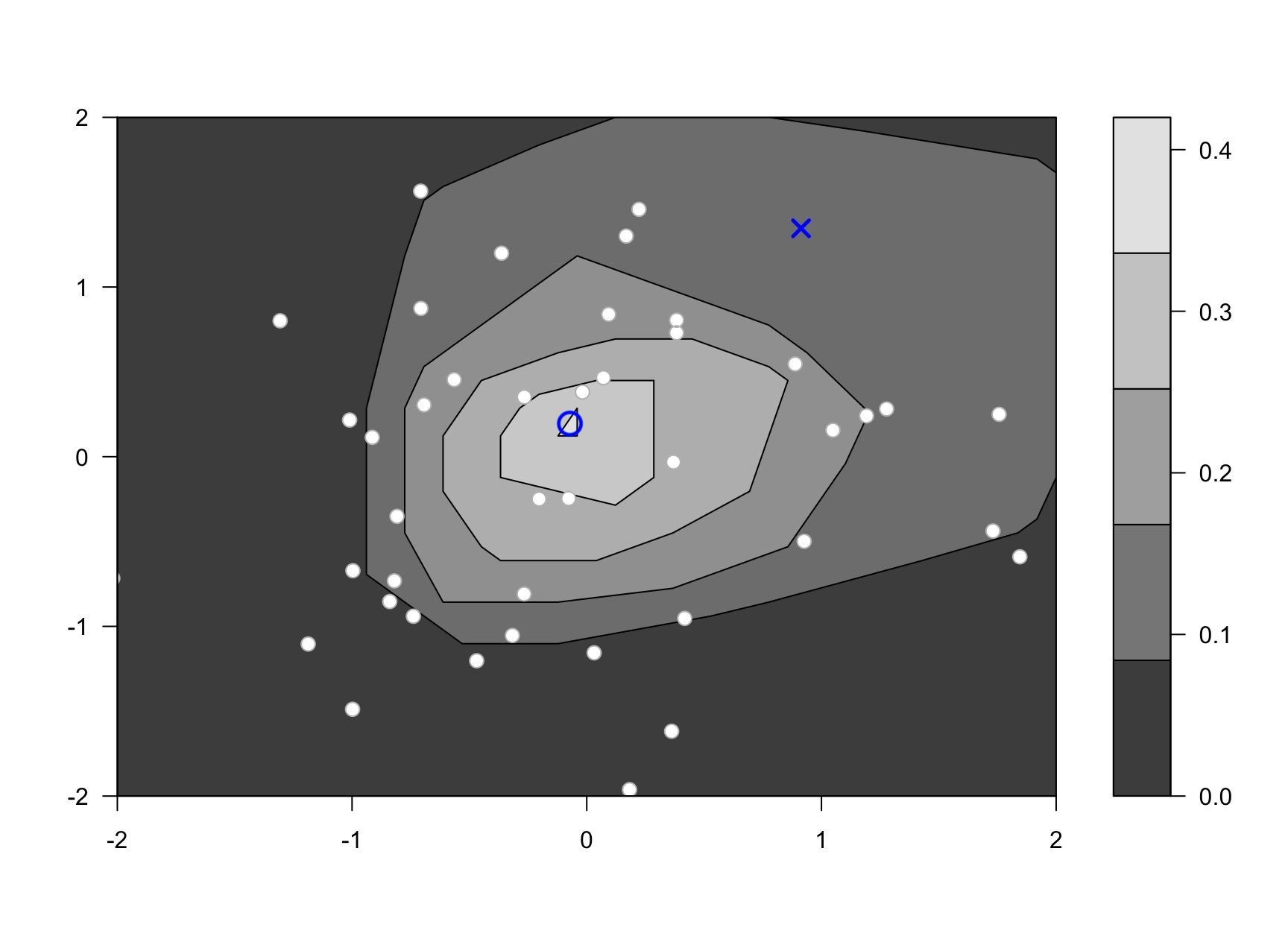}
        \caption{$n=50$, standard Gaussian}
    \end{subfigure}
    \hfill
    \begin{subfigure}[b]{0.45\textwidth}
        \centering
        \includegraphics[width=\textwidth]{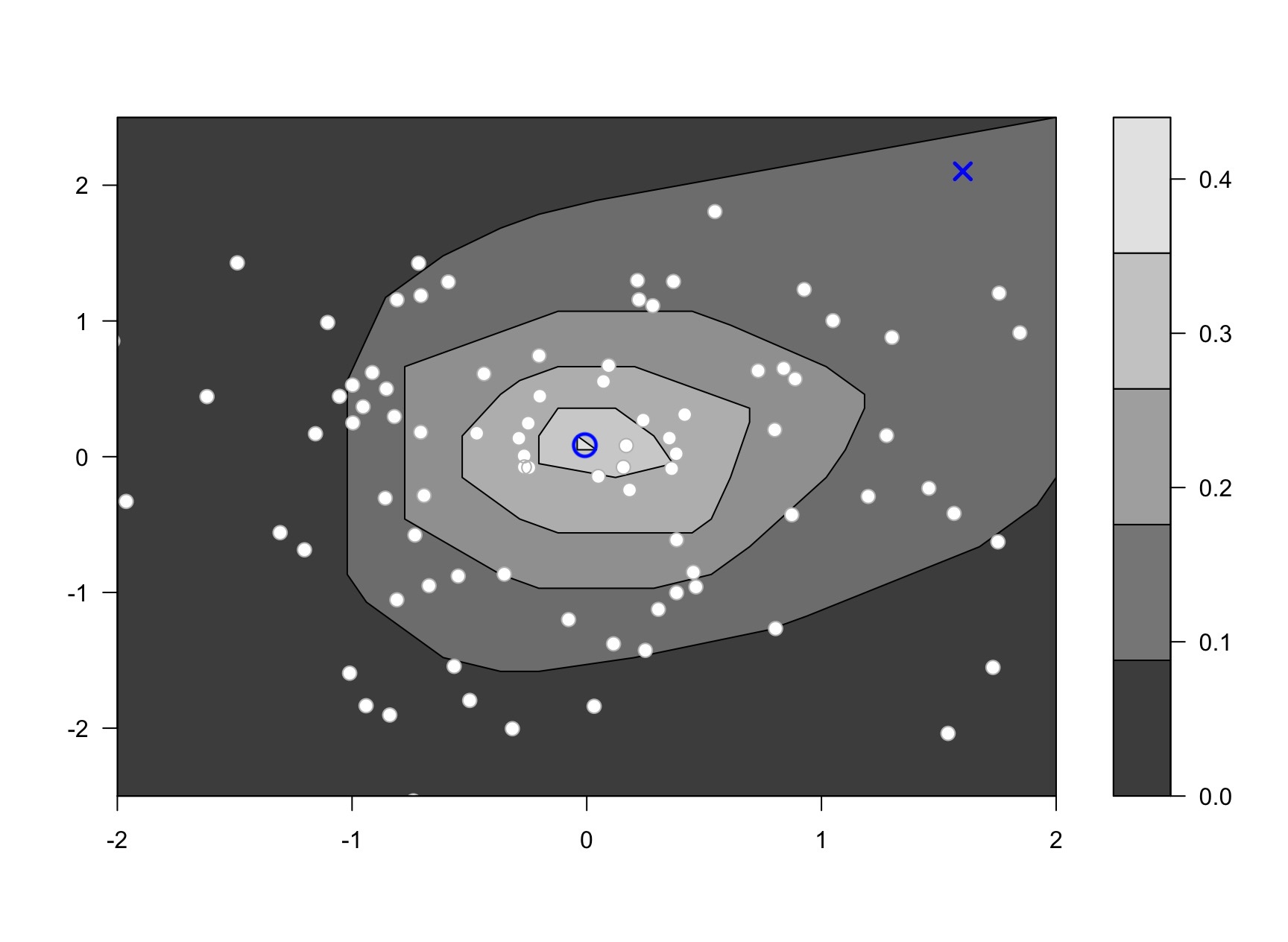}
        \caption{$n=100$, standard Gaussian}
    \end{subfigure}
    \\
    \begin{subfigure}[b]{0.45\textwidth}
        \centering
        \includegraphics[width=\textwidth]{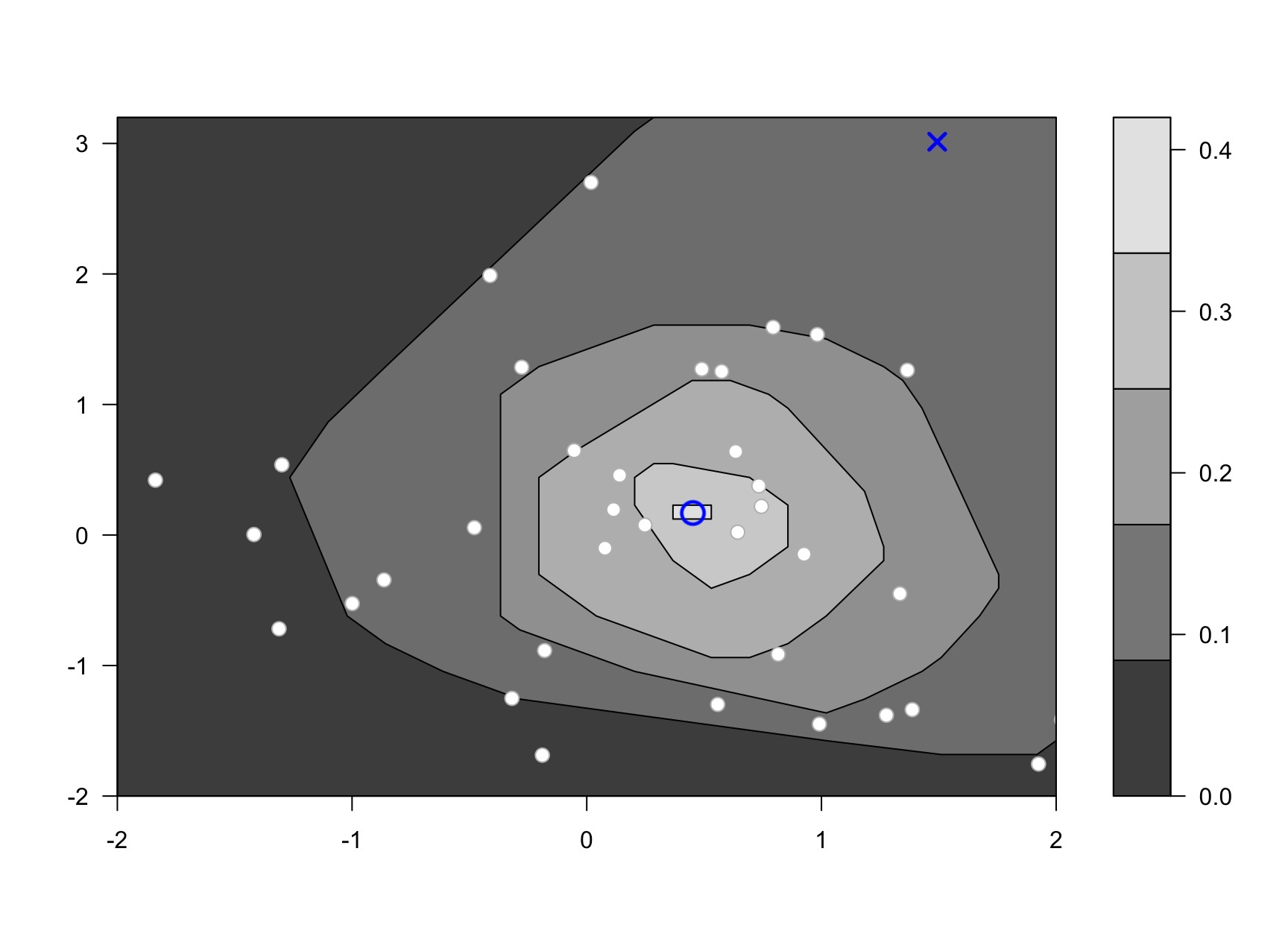}
        \caption{$n=50$, Student's t with $\nu=2.1$ d.f.}
    \end{subfigure}
    \hfill
    \begin{subfigure}[b]{0.45\textwidth}
        \centering
        \includegraphics[width=\textwidth]{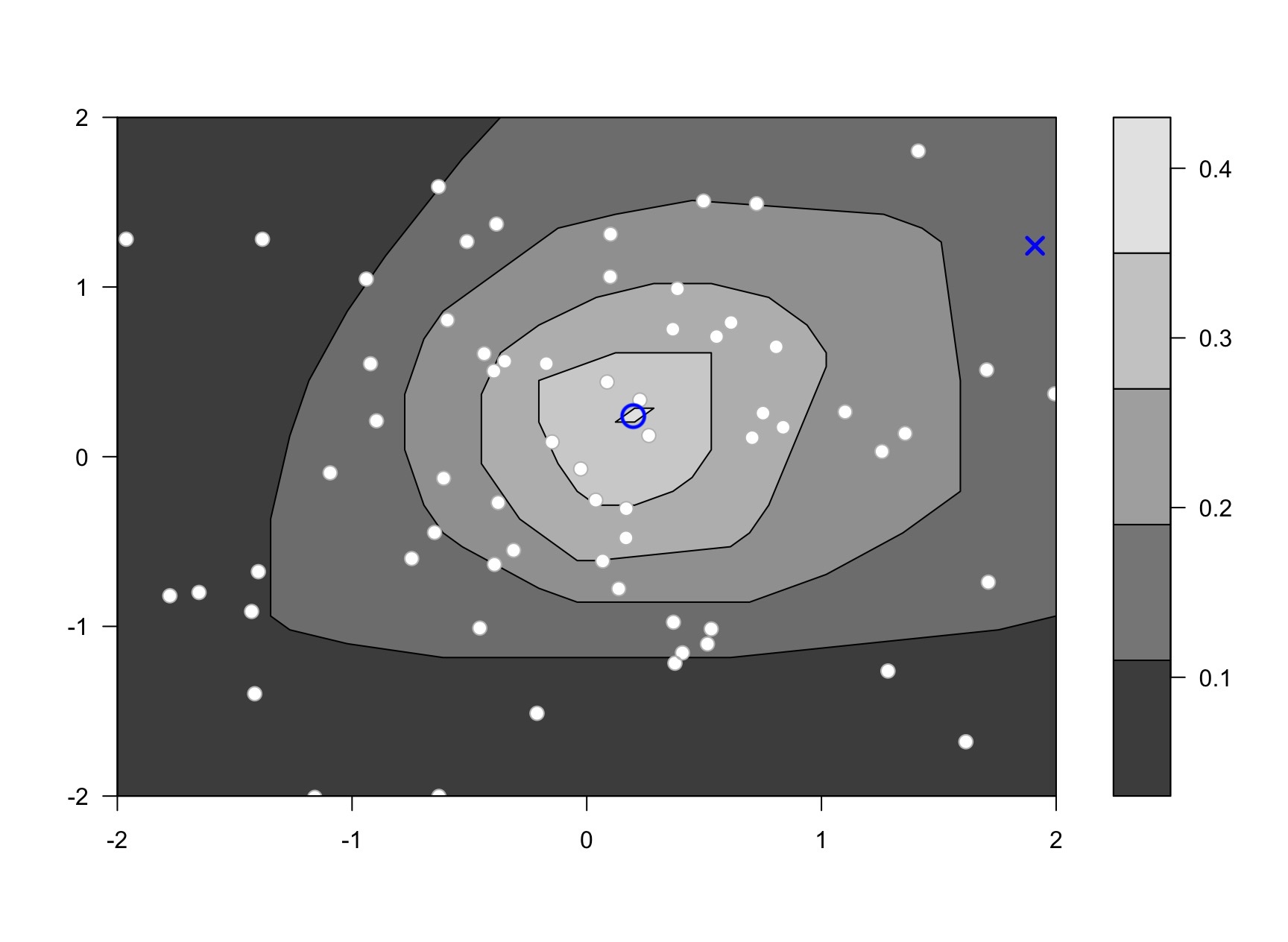}
        \caption{$n=100$, Student t with $\nu=2.1$ d.f.}
    \end{subfigure}
    \caption{Contaminated sample with contamination proportion $\eps=0.1$. Circle mark denotes Tukey median and the cross represents the mean.}
    \label{fig:robustness}
\end{figure}
More recently, \cite{minasyan2023statistically} proved that there exists an estimator $\tilde \mu_n$ that satisfies the inequality
\begin{equation}
\label{bound:optimal}
\|\tilde\mu_n(Y_1,\ldots,Y_n) - \mu \|_2 \leq C\|\Sigma\|^{1/2}\l( \sqrt{\frac{r(\Sigma)+t}{n}} + \eps \r)
\end{equation}
with probability at least $1-e^{-t}$ whenever $\frac{r(\Sigma)+t}{n}\leq c$.   Furthermore, if $\|\Sigma\|$ and $r(\Sigma)$ are known up to some absolute multiplicative factor, then the requirement $\frac{r(\Sigma)+t}{n}\leq c$ can be dropped. This inequality exhibits optimal dependence on the ``dimensional'' characteristic expressed via the effective rank $r(\Sigma)$ as opposed to the ambient dimension $d$, as well as correct dependence on the degree of contamination $\eps$.  

Deviation inequalities similar to bound \eqref{bound:optimal} are often referred to as \emph{sub-Gaussian guarantees}, since they hold, for $\eps=0$, for the sample mean of i.i.d. Gaussian random vectors due to Borell-TIS inequality \citep{borell1975brunn,cirel1976norms}. 
Whether Tukey's median satisfies an estimate of the form \eqref{bound:optimal} remained an open question: \emph{specifically, it was unknown whether $d$ can be replaced by $r(\Sigma)$, even for $\eps = 0$}. We will give a (partial) affirmative answer below: namely, we show that (a) inequality \eqref{bound:optimal} does hold for empirical Tukey's median when $\eps = 0$ (Corollary \ref{th:main}), and that (b) for large $n$, the term $\|\Sigma\|^{1/2}\l( \sqrt{\frac{r(\Sigma)+t}{n}}\r)$ dominates the error bound when $\eps\leq c(d)n^{-1/2}$ for a sufficiently small constant $c(d)$, see Corollary \ref{corollary:rates}. 
As the question we are investigating is mainly interesting in the situation when $d$ is large, we will assume that $d\geq 2$ in the remainder of the paper. 


\subsection{Contamination-free framework}

Our starting point is the contamination-free scenario corresponding to $\eps = 0$. 
Recall that a random vector $X\in\mathbb{R}^d$ has elliptically symmetric distribution $\m E(\mu,\Sigma,F)$, denoted $X\sim \m E(\mu,\Sigma,F)$ in the sequel, if
\begin{equation}
\label{def:elliptical}
X\stackrel{d}{=}\eta \cdot B U + \mu,
\end{equation}
where $\stackrel{d}{=}$ denotes equality in distribution, $\eta$ is a nonnegative random variable with cumulative distribution function $F$, $B$ is a fixed $d\times d$ matrix such that $\Sigma = BB^T$, and $U\in \mb R^d$ is a vector uniformly distributed over the unit sphere $\mathbb{S}^{d-1}$ and independent of $\eta$. 
The distribution $\mathcal{E}(\mu,\Sigma,~F)$ is well defined, as if $B_1 B_1^T=B_2 B_2^T$, then there exists an orthogonal matrix $Q$ such that $B_1=B_2 Q$, and $QU\stackrel{d}{=}U$. 
To avoid such ambiguity, we will allow $B$ to be any matrix satisfying $BB^T=\Sigma$. 
Whenever  $\mb E\eta^2<\infty$, we will also assume that $\eta$ is scaled so that $\mb E\eta^2 = d$, in which case the covariance matrix of $X$ exists and is equal to $\Sigma$. 

A useful property of elliptically symmetric distributions that is evident from the definition is the following: for every orthogonal matrix $Q$, $Q\Sigma^{-1/2}(X-\mu)$ has the same distribution as $\Sigma^{-1/2}(X-\mu)$. 
In other words, $\Sigma^{-1/2}(X-\mu)$ is spherically symmetric. 
We are ready to state our first result that essentially hinges on this fact. 
\begin{theorem}
\label{th:affine}
    Let $X_1,\ldots,X_n$ be i.i.d. copies of $X\sim \mathcal E(\mu,\Sigma,F)$ where $\Sigma$ is non-singular. Let $\widetilde\mu_n$ be any affine equivariant estimator of $\mu$.
    Suppose that the inequality
    \[
    \|\Sigma^{-1/2}(\widetilde\mu_n - \mu) \|_2 \leq e(n,t,d)
    \]
    holds with probability at least $1-p(t)$. Then 
    \[
    \|\widetilde\mu_n - \mu\|_2 \leq e(n,t,d)\|\Sigma\|^{1/2}\l( \sqrt{\frac{r(\Sigma)}{d}} + \sqrt{\frac{2t}{d}}\r)
    \]
    with probability at least $1-p(t) - e^{-t}$.
\end{theorem}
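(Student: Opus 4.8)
The plan is to exploit the interplay between the \emph{affine equivariance} of $\widetilde\mu_n$ and the \emph{spherical symmetry} of the whitened data, reducing the problem to a one-dimensional deviation estimate for a single uniformly distributed direction. First I would pass to the isotropic coordinates $Z_j := \Sigma^{-1/2}(X_j - \mu)$. As noted just before the statement, each $Z_j$ is spherically symmetric, and applying affine equivariance to the affine map $z \mapsto \Sigma^{-1/2}(z-\mu)$ gives $v := \Sigma^{-1/2}(\widetilde\mu_n - \mu) = \widetilde\mu_n(Z_1,\ldots,Z_n)$. The hypothesis then reads $\|v\|_2 \le e(n,t,d)$ with probability at least $1-p(t)$, and the target quantity factors as $\|\widetilde\mu_n - \mu\|_2 = \|\Sigma^{1/2}v\|_2$.

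The key structural observation is that $v$ is itself spherically symmetric. Indeed, for any orthogonal matrix $Q$, affine equivariance applied to $z \mapsto Qz$ yields $\widetilde\mu_n(QZ_1,\ldots,QZ_n) = Q\,\widetilde\mu_n(Z_1,\ldots,Z_n) = Qv$, while the spherical symmetry and independence of the $Z_j$ give $(QZ_1,\ldots,QZ_n) \stackrel{d}{=} (Z_1,\ldots,Z_n)$, and hence $Qv \stackrel{d}{=} v$. Consequently $v \stackrel{d}{=} R\,U$, where $R := \|v\|_2$ and $U$ is uniformly distributed on $S^{d-1}$ and independent of $R$. Writing $\|\widetilde\mu_n - \mu\|_2 = \|\Sigma^{1/2}v\|_2 = R\,\|\Sigma^{1/2}U\|_2$ then splits the estimation error into a radial factor controlled by the hypothesis and a purely geometric angular factor.

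The radial factor is immediate: $R = \|\Sigma^{-1/2}(\widetilde\mu_n-\mu)\|_2 \le e(n,t,d)$ on an event of probability at least $1-p(t)$. For the angular factor I would bound $\|\Sigma^{1/2}U\|_2$ in two steps. Its mean is estimated by Jensen's inequality, using $\mb E\, UU^T = \frac1d I_d$: $\mb E\|\Sigma^{1/2}U\|_2 \le \l(\mb E\, U^T\Sigma U\r)^{1/2} = \l(\tr(\Sigma)/d\r)^{1/2} = \|\Sigma\|^{1/2}\sqrt{r(\Sigma)/d}$. The fluctuations are controlled by concentration of measure on the sphere: the map $U \mapsto \|\Sigma^{1/2}U\|_2$ is Lipschitz with constant $\|\Sigma^{1/2}\| = \|\Sigma\|^{1/2}$, so Lévy's lemma yields $\|\Sigma^{1/2}U\|_2 \le \mb E\|\Sigma^{1/2}U\|_2 + \|\Sigma\|^{1/2}\sqrt{2t/d}$ with probability at least $1-e^{-t}$. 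Intersecting the two events (probability at least $1 - p(t) - e^{-t}$ by the union bound) gives $\|\widetilde\mu_n - \mu\|_2 = R\,\|\Sigma^{1/2}U\|_2 \le e(n,t,d)\,\|\Sigma\|^{1/2}\l(\sqrt{r(\Sigma)/d} + \sqrt{2t/d}\r)$, as claimed.

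I expect the conceptual heart of the argument to be the reduction establishing spherical symmetry of $v$: although short, it is exactly where affine equivariance is used, and it is what makes the conclusion valid for the entire class of affine equivariant estimators rather than for Tukey's median alone. Care is needed to verify that the equivariance identity and distributional invariance combine correctly and that the radius/direction decomposition is legitimate. The only genuinely quantitative subtlety is invoking the sphere concentration inequality with the exact constant needed to produce the factor $\sqrt{2t/d}$, which requires tracking the dimension of $S^{d-1}$ rather than settling for an absolute-constant version.
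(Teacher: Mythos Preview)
Your proposal is correct and follows essentially the same route as the paper: both reduce to the whitened vector $W = \Sigma^{-1/2}(\widetilde\mu_n-\mu)$, establish its rotational invariance via affine equivariance and spherical symmetry of the $Z_j$, factor $\|\Sigma^{1/2}W\|_2 = \|W\|_2\,\|\Sigma^{1/2}U\|_2$ with $U$ uniform on $S^{d-1}$, and then bound the angular factor by Jensen for the mean and L\'evy's concentration inequality for the deviation. The only cosmetic difference is that the paper conditions on $W\ne 0$ (setting $U=0$ otherwise) rather than invoking the independence of radius and direction for spherically symmetric vectors, but the two formulations are equivalent.
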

\noindent In typical scenarios, 
\[
e(n,t,d) \asymp \sqrt{\frac{d}{n}} + \sqrt{\frac{t}{n}} \text{ and } p(t) = e^{-t},
\]
whence 
\begin{equation}
\label{eq:combination}
   \|\widetilde\mu_n - \mu\|_2 \leq C\left( \sqrt{\frac{d}{n}} + \sqrt{\frac{t}{n}}\right)\sqrt{\|\Sigma\|}\left( \sqrt{\frac{r(\Sigma)}{d}} + \sqrt{\frac{t}{d}}\right)
    \leq C_1\sqrt{\|\Sigma\|}\left( \sqrt{\frac{r(\Sigma)}{n}}+ \sqrt{\frac{t}{n}}\right)
\end{equation}
with probability at least $1-2e^{-t}$ whenever $t\leq d$.
The proof of the Theorem \ref{th:affine} is given in Appendix \ref{section:proof:affine}. Next, we state the corollaries for Tukey's median and the Stahel-Donoho estimator, another well known affine-equivariant robust estimator of location, see  \cite{stahel1981breakdown,donoho1982breakdown} for its definition and properties. 
\begin{corollary}[Tukey's median]
\label{th:main}
Let $X_1,\ldots, X_n$ be i.i.d. copies of $X\sim N(\mu,\Sigma)$ where $\Sigma$ is non-singular, and let $\wh\mu_n$ be Tukey's median. Then there exist absolute constants $c,C>0$ such that
    \[
    \left\|\wh \mu_n - \mu \right\|_2 \leq C\sqrt{\|\Sigma\|}\left(\sqrt{\frac{r(\Sigma)}{n}} + \sqrt{\frac{t}{n}} \right)
    \]
    with probability at least $1-2e^{-t}$ whenever $\frac{d}{n}+\frac{t}{n}\leq c$. 
\end{corollary}
Compared with the previous results by \cite{chen2018robust}, the bound of Corollary \ref{th:main} demonstrates that the estimation error depends on the effective rank $r(\Sigma)$ rather than the ambient dimension $d$.
\begin{proof}
    Theorem 2.1 in \cite{chen2018robust} states that there exist an absolute constants $c,C>0$ such that
    \[
    e(n,t,d) = C\left(\sqrt{\frac{d}{n}} +\sqrt{\frac{t}{n}} \right)
    \] 
    and $p(t) = e^{-t}$ whenever $\frac{d}{n}+\frac{t}{n}\leq c$. Inequality \eqref{eq:combination} thus implies the desired result whenever $t\leq d$. If $t>d$, it suffices to show that under the stated assumptions, 
    $\|\Sigma^{-1/2}(\wh\mu_n - \mu)\|\leq C\sqrt{\frac{t}{n}}$ with probability at least $1-2e^{-t}$, which follows directly from Theorem 2.1 in \cite{chen2018robust}.
\end{proof}
\begin{remark}
    The bound of Corollary \ref{th:main} remains valid even when $d\geq n+1$: indeed, in this case with probability $1$ the convex hull of $X_1,\ldots,X_n$ is a simplex with $n$ vertices, and every point in the convex hull has (maximal) depth $1/n$, hence $\wh\mu_n$ coincides with the sample mean. This reasoning also demonstrates that generally, deviation guarantees for Tukey's median do not have to be sub-Gaussian in the regime $d\geq n+1 $ (e.g. when $X$ follows elliptically symmetric distribution where the random variable $\eta$ has heavy-tailed distribution). 
\end{remark}
\noindent Up to the values of numerical constants, the same result is valid for the Stahel-Donoho estimator. 
\begin{corollary}[Stahel-Donoho estimator]
\label{th:stahel}
Let $X_1,\ldots, X_n$ be i.i.d. copies of $X\sim N(\mu,\Sigma)$ where $\Sigma$ is non-singular, and let $\widetilde\mu_n$ be the barycenter of the set of Stahel-Donoho estimators. Then there exist absolute constants $c,C>0$ such that
    \[
    \left\|\widetilde \mu_n - \mu \right\|_2 \leq C\left(\sqrt{\frac{\tr(\Sigma)}{n}} + \sqrt{\|\Sigma\|}\sqrt{\frac{t}{n}} \right)
    \]
    with probability at least $1-2e^{-t}$ whenever $\frac{d}{n}+\frac{t}{n}\leq c$. 
\end{corollary}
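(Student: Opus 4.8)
The plan is to follow the proof of Corollary \ref{th:main} essentially line by line, substituting the Stahel--Donoho estimator for Tukey's median. Theorem \ref{th:affine} requires exactly two inputs: that $\widetilde\mu_n$ be affine equivariant, and that it obey a deviation bound $\|\Sigma^{-1/2}(\widetilde\mu_n - \mu)\|_2 \le e(n,t,d)$ with probability at least $1-p(t)$ for some $e(n,t,d)$ and $p(t)$ of the ``typical'' form $e(n,t,d)\asymp \sqrt{d/n}+\sqrt{t/n}$, $p(t)=e^{-t}$, valid in the regime $\frac{d}{n}+\frac{t}{n}\le c$. Once both are in hand, inequality \eqref{eq:combination} delivers the stated Euclidean bound with $r(\Sigma)=\tr(\Sigma)/\|\Sigma\|$ in place of $d$.

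First I would record that the Stahel--Donoho estimator is affine equivariant. This is classical: the estimator is a weighted average of the sample points in which each weight depends only on the one-dimensional projection outlyingness of the corresponding point, and these projection-based outlyingness scores are invariant under non-singular affine maps; see \citep{stahel1981breakdown,donoho1982breakdown}. Taking the barycenter of the (possibly non-unique) set of minimizers preserves equivariance, so $\widetilde\mu_n(MX_1+b,\ldots,MX_n+b)=M\widetilde\mu_n(X_1,\ldots,X_n)+b$ for every non-singular $M$ and every $b\in\mb R^d$, which is precisely the hypothesis of Theorem \ref{th:affine}.

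The step I expect to be the main obstacle is the Mahalanobis-norm guarantee. By affine invariance it suffices to treat the isotropic case $\Sigma=I_d$, where the claim becomes a sub-Gaussian concentration statement for the Stahel--Donoho location estimator of i.i.d.\ standard normal vectors around the origin. I would establish $\|\widetilde\mu_n\|_2 \le C(\sqrt{d/n}+\sqrt{t/n})$ with probability at least $1-e^{-t}$ either by invoking the existing minimax analysis of the Stahel--Donoho estimator under the Gaussian model, or, if a citation with the exact tail is unavailable, by a direct uniform argument: control the worst-case projected outlyingness over a net of directions on $S^{d-1}$, combine the per-direction Gaussian concentration of projected location and scale with a union bound to obtain the $\sqrt{d/n}$ term, and read off the $\sqrt{t/n}$ sub-Gaussian tail from the same estimate. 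The subtlety relative to Tukey's median is that the outlyingness function involves projected location \emph{and} scale rather than half-space counts, so the uniform control must handle a slightly richer empirical process; this is the only place where the numerical constants differ from the Tukey case.

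With affine equivariance and the Mahalanobis bound established, I would conclude exactly as in Corollary \ref{th:main}: for $t\le d$, inequality \eqref{eq:combination} yields $\|\widetilde\mu_n-\mu\|_2 \le C(\sqrt{\tr(\Sigma)/n}+\sqrt{\|\Sigma\|}\sqrt{t/n})$ with probability at least $1-2e^{-t}$, while for $t>d$ the Mahalanobis bound applied directly gives $\|\Sigma^{-1/2}(\widetilde\mu_n-\mu)\|_2 \le C\sqrt{t/n}$ and hence $\|\widetilde\mu_n-\mu\|_2 \le \sqrt{\|\Sigma\|}\,C\sqrt{t/n}$, which accounts for the second term of the claimed bound. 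Since only the Mahalanobis-norm guarantee is estimator-specific, the result holds with the same shape and only the numerical constants $c,C$ changed, as asserted.
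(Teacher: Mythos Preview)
Your proposal is correct and follows essentially the same approach as the paper. The paper's proof is a single sentence: it repeats the argument of Corollary~\ref{th:main} \emph{mutatis mutandis}, invoking Theorem~1 in \citep{depersin2023robustness} (in place of Theorem~2.1 in \citep{chen2018robust}) for the Mahalanobis-norm bound $e(n,t,d)\asymp\sqrt{d/n}+\sqrt{t/n}$, so your contingency plan of developing that bound directly is unnecessary.
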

\begin{proof}
    The argument repeats the proof of Corollary \ref{th:main} \emph{mutatis mutandis}, where Theorem 1 in \cite{depersin2023robustness} plays the role of Theorem 2.1 in \cite{chen2018robust}.
\end{proof}
\begin{remark}
    Results of both corollaries hold not just for Gaussian measures but more generally for a wide class of elliptically symmetric distributions. Indeed, the arguments in \cite{chen2018robust} and \cite{depersin2023robustness} leading to the bounds for $e(n,t,d)$  remain valid if we assume that $X\sim \m E(\mu,\Sigma,F)$ is such that the density $q(t)$ of one-dimensional projections
    \[
    \dotp{\Sigma^{-1/2}(X-\mu)}{v} = \eta\dotp{U}{v}, 
    \]
    where $\dotp{\cdot}{\cdot}$ denotes the standard Euclidean inner product, is continuous and strictly positive at the origin (also see Remark 2.2 in \cite{chen2018robust}). For example, multivariate t-distribution satisfies this assumption. 
\end{remark}

\subsection{Performance guarantees in the adversarial contamination framework}

Recall the adversarial contamination framework that was introduced in the beginning of Section \ref{section:main}. 
In what follows, we will let ${\wh\mu}_{n}^X$ be the empirical Tukey's median based on the uncontaminated data $X_1,\ldots,X_{n}$ and 
$\wh\mu_n^Y$ -- its counterpart based on the sample $Y_1,\ldots,Y_n$. 
It is easy to see that $\wh\mu_n^Y$ must have large empirical depth relative to the uncontaminated sample. Indeed, let
\[
D^X_{n}(z) = \inf_{\|u\|_2=1} P^X_{n} H(z,u)
\]
stand for the empirical depth of a point $z\in \mb R^d$, where the empirical distribution $P_n^X$ is based on $X_1,\ldots,X_{n}$, and 
\(
{\widehat d}_{n} := D_{n}^X\l({\wh\mu}_{n}^X\r)
\)
be the maximal (empirical) depth. Consider the set (the central region)
\[
R_{n}(\delta) := \{ z\in \mb R^d: \ D^X_{n}(z)\geq {\widehat d}_{n} - \delta \}
\]
of ${\widehat d}_{n}-\delta$-deep points, which is known to be a convex polytope whose facets belong to the hyperplanes defined by the elements of the sample. 
\begin{lemma}
\label{lem:deep set}
Let $\wh\mu_n^Y\in \arg\max_{z\in \mb R^d} D^Y_{n}(z)$. 
Then 
\[
\wh\mu^Y_n \in R_n(2\eps).
\]
\end{lemma}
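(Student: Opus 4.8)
The plan is to make the informal argument in display \eqref{eq:depth-difference} rigorous and then apply it at the empirical Tukey's median $\wh\mu_n^Y$ of the contaminated sample. The whole statement reduces to a single containment, so the strategy is to show that $\wh\mu_n^Y$ cannot lie outside $R_n(2\eps)$, which by definition of $R_n$ means showing that its uncontaminated empirical depth satisfies $D_n^X(\wh\mu_n^Y) \geq \wh d_n - 2\eps$.

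First I would establish the key perturbation estimate already hinted at in the excerpt: for every $z \in \mb R^d$,
\[
\l| D_n^Y(z) - D_n^X(z) \r| \leq \sup_{\|u\|_2 = 1} \l| \l(P_n^Y - P_n^X\r) H(z,u) \r| \leq \eps.
\]
The second inequality is the crux and follows from the contamination model: since $\frac{1}{n}\sum_{j=1}^n I\{Y_j \neq X_j\} = \eps$, the measures $P_n^X$ and $P_n^Y$ differ in at most $\eps n$ atoms, so their values on any fixed measurable set (in particular any half-space) differ by at most $\eps$. The first inequality is the standard bound that the infimum of a sum is controlled by the sum of an infimum and a supremum; concretely, for any $u$, $P_n^Y H(z,u) \leq P_n^X H(z,u) + |(P_n^Y - P_n^X)H(z,u)|$, and taking the infimum over $u$ on the left while bounding the supremum over $u$ on the right gives the claim. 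An entirely symmetric argument gives the reverse bound, so $D_n^Y$ and $D_n^X$ are uniformly within $\eps$ of each other.

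Next I would apply this in two directions. On one hand, evaluating at the uncontaminated median $\wh\mu_n^X$ gives $D_n^Y(\wh\mu_n^X) \geq D_n^X(\wh\mu_n^X) - \eps = \wh d_n - \eps$; since $\wh\mu_n^Y$ maximizes $D_n^Y$, it follows that $D_n^Y(\wh\mu_n^Y) \geq \wh d_n - \eps$. On the other hand, applying the perturbation bound at $z = \wh\mu_n^Y$ gives
\[
D_n^X(\wh\mu_n^Y) \geq D_n^Y(\wh\mu_n^Y) - \eps \geq \wh d_n - 2\eps.
\]
This is precisely the condition defining membership in $R_n(2\eps)$, so $\wh\mu_n^Y \in R_n(2\eps)$, completing the proof.

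I do not expect a genuine obstacle here, as the argument is essentially bookkeeping around display \eqref{eq:depth-difference}; the only point that warrants care is the uniform-in-$u$ bound $\sup_{\|u\|_2=1}|(P_n^Y - P_n^X)H(z,u)| \leq \eps$, where one must be sure the $\eps$ bound holds simultaneously for all directions $u$ and not just for a fixed one. This is immediate because the two empirical measures disagree on a fixed set of at most $\eps n$ indices regardless of $u$: for any half-space the discrepancy is at most $\frac{1}{n}\sum_j I\{Y_j \neq X_j\} = \eps$, a bound independent of the half-space. One subtlety worth flagging is that the statement of the lemma writes $\wh\mu_N^Y$ (capital $N$) while the conclusion uses $\wh\mu_n^Y$; I would treat these as the same object and use the sample size $n$ consistently throughout.
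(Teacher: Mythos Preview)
Your proposal is correct and follows essentially the same approach as the paper: both rely on the uniform perturbation bound $|D_n^Y(z)-D_n^X(z)|\leq \eps$ (which is exactly display~\eqref{eq:depth-difference}), applied once at $\wh\mu_n^X$ to lower-bound $D_n^Y(\wh\mu_n^Y)\geq \wh d_n-\eps$ and once at $\wh\mu_n^Y$ to conclude $D_n^X(\wh\mu_n^Y)\geq \wh d_n-2\eps$. The only cosmetic difference is that the paper phrases the first step contrapositively (points outside $R_n(\delta)$ have $D_n^Y$-depth below $\wh d_n-\eps$ when $\delta>2\eps$), while you argue directly; the logical content is identical.
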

\begin{proof}
Observe that the empirical depth $D^Y_n(z)$ (based on the contaminated dataset) of any point $z\notin R_{n}(\delta)$ satisfies the inequality 
\[
D^Y_n(z) < \eps + {\widehat d}_{n} - \delta.
\]
Indeed, 
\begin{equation}
\label{eq:depth-difference}
    D^Y_n(z) = \inf_{\|u\|_2=1} P^Y_{n} H(z,u)
    \leq \underbrace{\inf_{\|u\|_2=1} P^X_{n} H(z,u)}_{<{\widehat d}_{n} - \delta} + \underbrace{\sup_{\|u\|_2=1} \l| \l(P^Y_{n} - P^X_n\r) H(z,u) \r|}_{\leq \eps}.
\end{equation}
The latter does not exceed ${\widehat d}_{n} - \eps$ whenever $\delta>2\eps$. Since $D_n^Y\l({\wh\mu}_{n}\r)$ is no smaller than $\widehat d_{n}-\eps$ (this is easily checked in a way similar to inequality \eqref{eq:depth-difference}), the result follows. 
\end{proof}

Lemma~\ref{lem:deep set} allows us to reduce the question of understanding the effect of contamination to a problem that only involves the original sample $X_1,\ldots,X_n$: indeed, its immediate corollary is the inequality

\begin{equation}
    \label{eq:crude-bound}
\|\wh\mu_n^Y - \mu\|_2 \leq \|\wh\mu_n^X - \mu\|_2 + \mathrm{diam}\l(R_n(2\eps)\r),
\end{equation}
where $\mathrm{diam}(A)$ stands for the diameter of a set $A\subseteq\mb R^d$. Results by \cite{chen2018robust} (see the proof of Theorem 2.1 therein; also see \cite{brunel2019concentration} for wider classes of distributions) imply that 
\begin{equation}
    \label{eq:isotropic2}
\textrm{diam}\l(R_{n}(2\eps) \r)\leq C\sqrt{\|\Sigma\|}\l(\sqrt{\frac{d+t}{n}}+\eps\r)
\end{equation}
with probability at least $1-e^{-t}$ whenever $\frac{d+t}{n} +\eps\leq c$ for some absolute constants $c,C>0$. In particular, it trivially yields, together with \eqref{eq:crude-bound}, that for $\eps\geq \sqrt{\frac dn}$, 
\begin{equation}
\label{eq:bigepsilon}
    \left\|\wh \mu_n^Y - \mu \right\|_2 \leq C\left( \sqrt{\frac{\tr(\Sigma)}{n}} + \sqrt{\|\Sigma\|}\left(\sqrt{\frac{t}{n}} +\eps\right) \right)
\end{equation}
with probability at least $1-2e^{-t}$. Therefore, in the remainder of this section, we will be primarily interested in the situation when  
\(
\eps:=\eps_n \leq \sqrt{\frac{d}{n}}.
\)
To this end, we will assume that $d$ and $r(\Sigma)$ are fixed, where $d$ is potentially much larger than $r(\Sigma)$, and that $n\to\infty$. 
The following theorem is our main result in this direction. 
\begin{theorem}
\label{th:diameter}
Assume that $X_1,\ldots,X_n$ are independent random vectors sampled from the isotropic normal distribution $N(\mu,I_d)$. Then
\begin{enumerate}
\item[(i)] for any $\eps_n\in[0,1/2)$ and $d\geq 3$, 
    \[
    \mathrm{diam}(R_{n}(\eps_n)) = \frac{1}{\sqrt{n}}+O_P\l(\eps_n \vee n^{-\frac{1}{2(d-1)}-\frac{1}{2}}\log^{3/2}(n)\r);
    \]
\item[(ii)] for any sequence $\eps_n = o(n^{-1/2})$ and $d\geq 3$, 
    \[
    \mathrm{diam}(R_{n}(\eps_n)) = o_P\l(n^{-1/2}\r);
    \]
\item[(iii)] if $d=2$, then for $\eps_n \in[0,1/2)$, 
\[
\mathrm{diam}(R_{n}(\eps_n))  = O_P\l(\eps_n \vee \frac{\log^{1/2}(n)}{n^{3/4}}\r).
\]
\end{enumerate}
\end{theorem}
\noindent Let us remark that Corollary \ref{corollary:diameter} stated below implies that the bound \textbf{(iii)} for $d=2$ is sharp for $\eps_n\gtrsim n^{-3/4}\log^{1/2}(n)$. Next, we state the immediate implications of this theorem.
\begin{corollary}[Tukey's median set]
    \label{corollary:tukey-set}
The diameter of the set $R_n(0)$ satisfies
\[
\mathrm{diam}(R_n(0)) = o_P\l( n^{-\frac{1}{2}} \r).
\]
Moreover, when $d=2$, 
\[
\mathrm{diam}(R_{n}(0)) = O_P\l(n^{-3/4}\log^{1/2}(n)\r).
\]
\end{corollary}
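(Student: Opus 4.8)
The plan is to read off both bounds directly from Theorem \ref{th:diameter} by exploiting the monotonicity of the level sets $R_n(\delta)$ in the parameter $\delta$. The starting observation is that $R_n(0) = \{z\in\mb R^d:\ D^X_n(z)\geq \wh d_n\}$ is exactly the maximal-depth (Tukey median) set, and since $\wh d_n - \delta \leq \wh d_n$ for every $\delta\geq 0$, we obtain the nesting $R_n(0)\subseteq R_n(\delta)$. Consequently $\mathrm{diam}(R_n(0))\leq \mathrm{diam}(R_n(\delta))$ for all $\delta\geq 0$, so it suffices to bound the diameter of the larger sets $R_n(\eps_n)$ for suitably chosen admissible sequences $\eps_n$.

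For the first claim I would fix any deterministic sequence $\eps_n>0$ with $\eps_n = o(n^{-1/2})$, say $\eps_n = 1/n$. Part (i) of Theorem \ref{th:diameter} then gives $\mathrm{diam}(R_n(\eps_n)) = o_P(n^{-1/2})$, and the nesting above yields $\mathrm{diam}(R_n(0))\leq \mathrm{diam}(R_n(\eps_n)) = o_P(n^{-1/2})$, as claimed.

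For the two-dimensional refinement I would specialize to $\eps_n = n^{-3/4}\log^{3/2}(n)$. This choice is admissible in Theorem \ref{th:diameter} because $\eps_n/n^{-1/2} = n^{-1/4}\log^{3/2}(n)\to 0$, so $\eps_n = o(n^{-1/2})$. With this choice the maximum $n^{-3/4}\log^{3/2}(n)\vee \eps_n$ collapses to $n^{-3/4}\log^{3/2}(n)$, so part (ii) gives $\mathrm{diam}(R_n(\eps_n)) = O_P(n^{-3/4}\log^{3/2}(n))$; the nesting then transfers this bound to $R_n(0)$.

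No genuine obstacle arises here, since the corollary is a routine specialization of the theorem. The only point worth flagging is that one should avoid literally substituting $\eps_n\equiv 0$ into the theorem's hypothesis; the monotonicity argument sidesteps this by controlling the smaller set $R_n(0)$ through a strictly positive sequence $\eps_n$ that still decays at the required rate.
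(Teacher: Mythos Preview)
Your proposal is correct and follows the paper's approach: the paper presents Corollary~\ref{corollary:tukey-set} as an immediate consequence of Theorem~\ref{th:diameter} without a separate proof, and your argument simply makes that implication explicit. One small remark: the detour through a strictly positive sequence and the nesting $R_n(0)\subseteq R_n(\eps_n)$ is not actually needed, since the identically-zero sequence $\eps_n\equiv 0$ already satisfies $\eps_n=o(n^{-1/2})$ and can be plugged directly into both parts of the theorem (yielding $n^{-3/4}\log^{3/2}(n)\vee 0 = n^{-3/4}\log^{3/2}(n)$ in part~(ii)); your caution there is harmless but unnecessary.
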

Results of this type (besides the well known case $d=1$ that was mentioned in the introduction) appear to be new in the literature. It would be interesting to improve the $o_P\l(n^{-1/2}\r)$ rate for $d\geq 3$; the technical challenges that have to be solved in order to answer this question are related to the analysis of random concave functions. We elaborate on this point in more detail in Section \ref{section:discussion}. 
Next, we explain the implications of Theorem \ref{th:diameter} for the estimation error of Tukey's median in the presence of contamination. 
\begin{corollary}[Estimation error under adversarial contamination]
\label{corollary:rates}
   Assume that $d\geq 3$ and that $X_1,\ldots,X_n$ are i.i.d. random vectors sampled from the multivariate normal distribution $N(\mu,\Sigma)$. In the adversarial contamination framework, 
    \[
    \left\|\wh \mu_n^Y - \mu \right\|_2 \leq C\left( \sqrt{\frac{\tr(\Sigma)}{n}} + \sqrt{\|\Sigma\|}\sqrt{\frac{t}{n}} \right)+O_P\l(\eps_n\vee n^{-\frac{1}{2(d-1)}-\frac{1}{2}}\log^{3/2}(n)\r)
    \]
    on an event of probability at least $1-2e^{-t}$, where $C>0$ is an absolute constant.
\end{corollary}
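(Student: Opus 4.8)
The plan is to combine the decomposition already recorded in equation \eqref{eq:crude-bound} with the diameter estimate from Theorem \ref{th:diameter} and the contamination-free guarantee from Corollary \ref{th:main}. Writing $\wh\mu_n^X$ for the Tukey's median of the clean sample and $\wh\mu_n^Y$ for its contaminated counterpart, Lemma \ref{lem:deep set} places $\wh\mu_n^Y$ inside the deep set $R_n(2\eps_n)$, so the triangle inequality gives
\[
\|\wh\mu_n^Y - \mu\|_2 \leq \|\wh\mu_n^X - \mu\|_2 + \mathrm{diam}\l(R_n(2\eps_n)\r).
\]
The first term is controlled by Corollary \ref{th:main}: under $N(\mu,\Sigma)$ with nonsingular $\Sigma$, it is at most $C\l(\sqrt{\tr(\Sigma)/n} + \sqrt{\|\Sigma\|}\sqrt{t/n}\r)$ with probability at least $1-2e^{-t}$ (subject to the mild constraint $\frac{d}{n}+\frac{t}{n}\le c$, which is why $d$ and $r(\Sigma)$ are held fixed while $n\to\infty$). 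The task thus reduces to showing that the second term is of the same or smaller order, i.e. $o_P\l(\sqrt{\tr(\Sigma)/n}\r)$, so that it can be absorbed into the constant $C$ without changing the form of the bound.

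To bound the diameter I would first reduce to the isotropic case via affine equivariance. Since $Z_j := \Sigma^{-1/2}(X_j-\mu)$ is isotropic normal $N(0,I_d)$ and half-space depth is affine invariant, the deep set for the $X$-sample is the image under $z\mapsto \Sigma^{1/2}z + \mu$ of the deep set for the $Z$-sample; consequently $\mathrm{diam}\l(R_n^X(2\eps_n)\r) \leq \|\Sigma\|^{1/2}\,\mathrm{diam}\l(R_n^Z(2\eps_n)\r)$, where the latter is the deep set for the standardized, isotropic data to which Theorem \ref{th:diameter} directly applies. One subtlety is that the contamination threshold should also be transported correctly; because the contamination proportion $\eps_n$ is invariant under the affine map (it is a count of corrupted indices), the set $R_n(2\eps_n)$ transforms cleanly and the threshold is unchanged. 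With $\eps_n = o(n^{-1/2})$, part (i) of Theorem \ref{th:diameter} yields $\mathrm{diam}\l(R_n^Z(2\eps_n)\r) = o_P(n^{-1/2})$, hence $\mathrm{diam}\l(R_n^X(2\eps_n)\r) = \|\Sigma\|^{1/2}\, o_P(n^{-1/2})$, which is indeed $o_P\l(\sqrt{\tr(\Sigma)/n}\r)$ since $\tr(\Sigma)\ge \|\Sigma\|$.

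The last step is to assemble the probabilities: the Corollary \ref{th:main} event holds with probability at least $1-2e^{-t}$, while the $o_P(n^{-1/2})$ diameter bound fails only on an event of probability $o(1)$ as $n\to\infty$. Intersecting the two good events and using the union bound gives the stated probability $1-2e^{-t}-o(1)$, and on this intersection the two contributions add to at most $C\l(\sqrt{\tr(\Sigma)/n} + \sqrt{\|\Sigma\|}\sqrt{t/n}\r)$ after adjusting the absolute constant. The main obstacle I anticipate is not any single hard estimate but rather the bookkeeping of asymptotic ($o_P$) versus fixed-$t$ exponential statements: Theorem \ref{th:diameter} is phrased as an in-probability limit with $d$ and $r(\Sigma)$ fixed, whereas Corollary \ref{th:main} is a finite-sample deviation bound, and the resulting guarantee inherits the weaker $1-2e^{-t}-o(1)$ form precisely because the diameter term only comes with an $o(1)$ (rather than exponential) failure probability. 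Care is also needed to confirm that the regime $\eps_n \ll \sqrt{d/n}$ is exactly the complement of the ``large $\eps$'' regime handled separately in \eqref{eq:bigepsilon}, so that the diameter bound invoked here is the right one.
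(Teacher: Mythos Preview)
Your proposal is correct and follows essentially the same approach as the paper: both arguments combine the decomposition \eqref{eq:crude-bound} with Corollary \ref{th:main} for the clean term, reduce the diameter of $R_n(2\eps_n)$ to the isotropic case via affine invariance (picking up the factor $\|\Sigma\|^{1/2}$), apply Theorem \ref{th:diameter}(i) to obtain $o_P(n^{-1/2})$, and then absorb this into the leading term on an event of probability $1-2e^{-t}-o(1)$. Your discussion of the bookkeeping between the finite-sample and $o_P$ statements is also on point.
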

In particular, when $n$ is large and $\eps_n \leq  c(d)n^{-1/2}$ for $c(d)$ small enough, we conclude that the estimation error is still controlled by the effective rank $r(\Sigma)$ rather than the ambient dimension $d$.  
\begin{proof}
    First, note that due to affine invariance of the depth function $D_n(z)$, the depth regions corresponding to the data $X_1,\ldots,X_n$ and $\Sigma^{-1/2}(X_1-\mu),\ldots,\Sigma^{-1/2}(X_n-\mu)$ are related via
    \[
    R_n(2\eps_n;X_1,\ldots,X_n) = \mu + \Sigma^{1/2}\,R_n(2\eps_n;\Sigma^{-1/2}(X_1-\mu),\ldots,\Sigma^{-1/2}(X_n-\mu)).
    \]
    Therefore, 
    \[
    \mathrm{diam}\l( R_n(2\eps_n;X_1,\ldots,X_n) \r) \leq \|\Sigma\|^{1/2}\mathrm{diam}\l( R_n(2\eps_n;\Sigma^{-1/2}(X_1-\mu),\ldots,\Sigma^{-1/2}(X_n-\mu)) \r).
    \]
    Inequality \eqref{eq:crude-bound} combined with part (i) of Theorem \ref{th:diameter} and Corollary \ref{th:main} yields that 
\begin{equation}
    \label{eq:sum:v1}
    \left\|\wh \mu_n^Y - \mu \right\|_2 \leq C\left( \sqrt{\frac{\tr(\Sigma)}{n}} +\sqrt{\|\Sigma\|}\sqrt{\frac{t}{n}} \right)+ \sqrt{\frac{\|\Sigma\|}{n}} + O_P\l(\eps_n \vee n^{-\frac{d}{2(d-1)}}\log^{3/2}(n)\r)
\end{equation}
on event of probability at least $1-2e^{-t}$, and the desired conclusion follows. 
\end{proof}
\noindent Next, we turn our attention to the proof of the key technical result, Theorem \ref{th:diameter}.
\subsection{Proof of Theorem \ref{th:diameter}}
\textbf{Overview of the proof.} Everywhere below, we will assume without loss of generality that $\mu=0$. 
The main idea of the proof is to show that with high probability, the trajectories of the empirical depth process $D_n(z)$ restricted to a small neighborhood of Tukey's median $\wh \mu_n$ can be uniformly approximated by (random) concave functions that have unique maximizers. In turn, this allows one to control diameters of the central regions (the upper level sets of the function $D_n(z)$), see Figure \ref{fig:concave} for the visual illustration. 
\begin{figure}[t]
      \centering
       \includegraphics[width=0.5\textwidth]{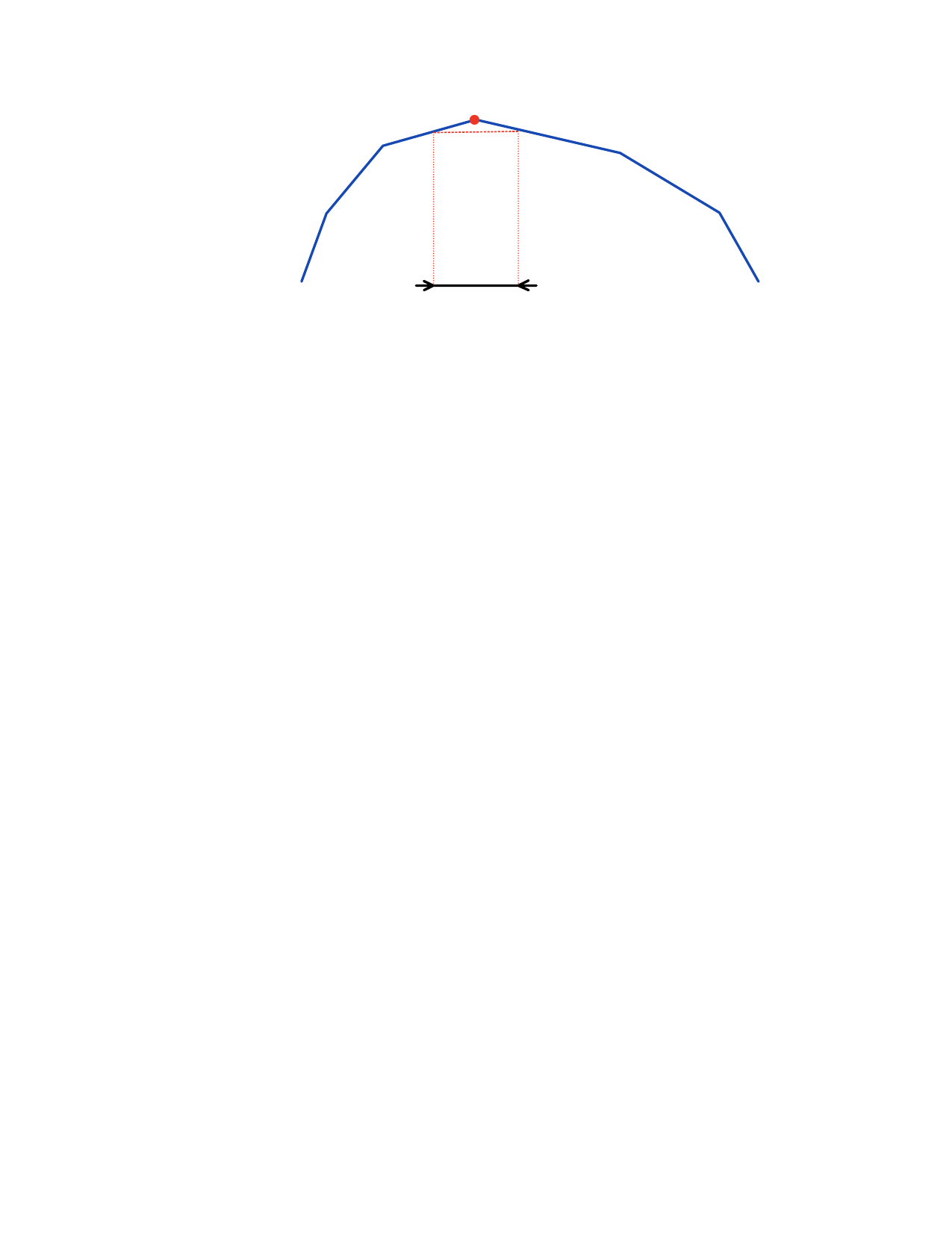}     
       \caption{A concave function with a unique maximizer, and its upper level set.}
         \label{fig:concave}
\end{figure}

These concave functions are defined as (a variant of\footnote{Typically, the Legendre-Fenchel transform is defined as the supremum of a family of linear functions, instead of the infumum.}) the Legendre-Fenchel transform of the Brownian bridge process indexed by halfspaces. The main technical challenges are (a) to establish control of the difference between the trajectories of the empirical depth process and aforementioned random concave functions, and (b) understand the behavior of the random concave functions near its maxima. The first goal is accomplish via new Koml\'{o}s–Major–Tusn\'{a}dy type strong approximation results that rely on the fundamental estimates due to \cite{koltchinskii1994komlos}, while the second problem is analyzed using geometric properties of subdifferentials of Legendre-Fenchel transforms. 

We will now present the required technical details. Everywhere below, $C>0$ will denote a sufficiently large absolute constant. 
Recall that in view of inequality \eqref{eq:isotropic2}, the upper level set $R_{n}(2\eps_n)$ of empirical Tukey's depth satisfies
    \[
    R_{n}(2\eps_n)\subseteq B\l(0,C_1\sqrt{\frac{d+t}{n}}\r)
    \]
    with probability at least $1-e^{-t}$. Next, define stochastic process $W_n(z)$ via
    \begin{equation}
    \label{def:W_n(z)}
    W_{n}(z) = \sqrt{n}\inf_{\|v\|_2=1} \l( \EHS{n}{\frac{z}{\sqrt{n}}}{v} - \frac{1}{2}\r), \ \|z\|_2\leq C_1\sqrt{d+t}.
    \end{equation}
    The process $W_{n}(z)$ is simply a rescaled version of the empirical depth process in the neighborhood of $\mu$. 
    On event of probability at least $1-e^{-t}$, we can equivalently express $R_{n}(2\eps_n)$ via
    \[
    R_{n}(2\eps_n)=\l\{\frac{z}{\sqrt n}:\ \|z\|_2\leq C\sqrt{d+t} \text{ and } W_{n}(z)\geq \sup_{z\in \mb R^d} W_{n}(z) - 2\sqrt{n}\eps_{n}\r\}.
    \]
    It is well known (for example, see page 296 in the paper by \cite{masse2002asymptotics}) that the rescaled depth process $W_{n}(z)$ converges weakly to 
    \begin{equation}
	\label{def:W(z)}
    W(z) := W_G(z) = \inf_{\|v\|_2=1} \l\{ G(v) - \frac{1}{\sqrt{2\pi}} \langle z, v\rangle\r\},\ \|z\|_2 \leq C_1\sqrt{d+t},
    \end{equation}
    where $\{G(v), \ \|v\|_2=1\}$ is the Brownian bridge indexed by halfspaces -- a centered Gaussian process with covariance function
    \begin{equation*}
    \mb EG(u)G(v) 
    = \mathrm{Cov}\l(I\{X\in H(0,u)\},I\{X\in H(0,v)\}\r),
    \end{equation*}
    where $X\sim N(0,I_d)$. A important observation is the fact that the trajectories of $W(z)$ are concave with probability $1$, as $W(z)$ is just the Legendre-Fenchel transformation of the Brownian bridge $G(v)$ -- in other words, $W(z)$ is the infimum of a family of linear functions, which is always concave.  
    We will need the following estimate on the rate of convergence of $W_n$ to $W$. 
    \begin{lemma}
    \label{lemma:rate}
        For every $C>0$, there exists $C_1>0$ such that for all $R\geq 1$, $n>C_1(R+d)^2$, one can construct on the same probability space independent random vectors $X_1,\ldots,X_n$ with standard normal distribution and a Brownian bridge $\{G(v), \ \|v\|_2=1\}$ such that 
        \[
        \sup_{\|z\|_2\leq R }|W_{n}(z) - W(z)| \leq \delta_{n} +  K \l( \frac{R^3}{n} + \l(\frac{R^2}{n}\r)^{1/4}\sqrt{d\log(n)} \r)
        \]
        with probability at least $1-2/n$, where $K=K(d)>0$, 
        \[
        \delta_{n} = C(d) n^{-\frac{1}{2(d-1)}}\log^{1+\rho(d)}(n)
        \text{ and } \rho(d)=\begin{cases}
        1, & d=2, \\ \frac{1}{2}, & d\geq 3 \end{cases}.
        \]
    \end{lemma}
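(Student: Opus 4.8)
The plan is to decompose $W_n(z) - W(z)$ into three contributions — a deterministic linearization error coming from the population half-space probabilities, a strong-approximation (coupling) error between the empirical process and the half-space Brownian bridge, and a modulus-of-continuity error for the limiting Gaussian process — and to bound each separately. The coupling is the crux; the other two are analytic estimates, and the hypothesis $n>C_1R$ keeps us in the small-displacement regime where both apply.

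I would first isolate the empirical process. Writing $\alpha_n(y,v) := \sqrt n\,(P^X_n - P)H(y,v)$ for the half-space-indexed empirical process, we have
\[
\sqrt n\Bigl(\EHS{n}{\tfrac{z}{\sqrt n}}{v} - \tfrac12\Bigr) = \alpha_n\Bigl(\tfrac{z}{\sqrt n},v\Bigr) + \sqrt n\Bigl(PH\bigl(\tfrac{z}{\sqrt n},v\bigr) - \tfrac12\Bigr).
\]
Since $X\sim N(0,I_d)$ makes $\langle X,v\rangle$ standard normal, $PH(z/\sqrt n,v) = \Phi(-\langle z,v\rangle/\sqrt n)$, and a third-order Taylor expansion at the origin (using $\Phi'(0) = 1/\sqrt{2\pi}$ and $\Phi''(0)=0$) yields
\[
\sqrt n\Bigl(PH\bigl(\tfrac{z}{\sqrt n},v\bigr) - \tfrac12\Bigr) = -\frac{\langle z,v\rangle}{\sqrt{2\pi}} + O\!\left(\frac{R^3}{n}\right)
\]
uniformly over $\|v\|_2=1$ and $\|z\|_2\le R$; this remainder furnishes the $R^3/n$ term. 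Using $|\inf_v f - \inf_v g|\le \sup_v|f-g|$, I may pull the uniform remainder out of the infimum and reduce the whole problem to bounding $\sup_{\|z\|_2\le R,\,\|v\|_2=1}\bigl|\alpha_n(z/\sqrt n,v) - G(v,0)\bigr|$, since $W(z)=\inf_v\{G(v,0) - \langle z,v\rangle/\sqrt{2\pi}\}$.

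For this supremum I would insert the bridge at the matching half-space and split
\[
\alpha_n\bigl(\tfrac{z}{\sqrt n},v\bigr) - G(v,0) = \Bigl[\alpha_n\bigl(\tfrac{z}{\sqrt n},v\bigr) - G\bigl(v,\tfrac{z}{\sqrt n}\bigr)\Bigr] + \Bigl[G\bigl(v,\tfrac{z}{\sqrt n}\bigr) - G(v,0)\Bigr].
\]
The first bracket is controlled by a Koml\'os--Major--Tusn\'ady-type strong approximation for the empirical process indexed by half-spaces in $\mathbb R^d$: on a common probability space one constructs $X_1,\dots,X_n$ and the half-space bridge $G$ so that $\sup_{y,v}|\alpha_n(y,v)-G(v,y)|\le\delta_n$ with probability at least $1-2/n$, where $\delta_n = C(d)\,n^{-1/(2d)}\log^{3/2}(n)$; the exponent $1/(2d)$ reflects the metric-entropy (VC) structure of the half-space class, and this is precisely the source of the $\delta_n$ term. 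The second bracket is the increment of the limiting Gaussian process over a displacement of size $\|z/\sqrt n\|_2\le R/\sqrt n$: its variance equals the $P$-mass of the symmetric difference $H(z/\sqrt n,v)\,\triangle\,H(0,v)$, a slab of Gaussian measure of order $|\langle z,v\rangle|/\sqrt n\lesssim R/\sqrt n$, so the increment has maximal standard deviation of order $(R/\sqrt n)^{1/2}$. A Gaussian concentration/maximal inequality over the compact index set $\{\|z\|_2\le R\}\times S^{d-1}$ then bounds this bracket, with probability at least $1-1/n$, by a multiple of $(R/\sqrt n)^{1/2}\log^{1/2}(n)$, where the $\log^{1/2}(n)$ factor is incurred by the $1-1/n$ tail bound; this produces the $n^{-1/4}\log^{1/2}(n)$ scaling of the last term in the statement, the precise power of $R$ being immaterial in the regime $R\asymp\sqrt{d+t}$ of interest.

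Assembling the three estimates through the triangle inequality delivers the asserted bound on the common probability space produced by the coupling, with the $2/n$ exceptional probability inherited from the strong approximation. The main obstacle is this coupling: producing a single Brownian bridge $G$ that uniformly approximates the empirical process over the entire uncountable family of half-spaces at rate $\delta_n$. This is the deep ingredient, to be either imported from the strong-approximation literature for multivariate empirical processes or built by a Haar/dyadic KMT scheme adapted to half-spaces; by comparison, the linearization and the Gaussian modulus-of-continuity estimate are routine. A secondary point requiring care is that the bound on the Gaussian increment be genuinely uniform in $z$ rather than merely pointwise, which the maximal inequality over the index set resolves.
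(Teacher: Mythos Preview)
Your proposal is correct and follows essentially the same route as the paper: the paper likewise decomposes into the deterministic $O(R^3/n)$ linearization of $\sqrt n\,(PH(z/\sqrt n,v)-\tfrac12)$, the strong-approximation error $\delta_n$ from Koltchinskii's KMT-type coupling for the half-space empirical process, and the Gaussian increment $G(v,z/\sqrt n)-G(v,0)$ handled by Borell--TIS concentration plus Dudley's entropy bound. The only substantive addition in the paper is an explicit verification of the $L^2$ modulus-of-continuity hypothesis $\omega(f_{v,b};h)\lesssim\sqrt h$ required to invoke Koltchinskii's theorem, done by transforming to uniform coordinates via $\Phi^{-1}$; you correctly flag this coupling step as the deep ingredient to import, and your use of $|\inf_v f-\inf_v g|\le\sup_v|f-g|$ is just a compact repackaging of the paper's case split on the sign of $W_n(z)-W(z)$.
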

    \noindent The proof of this lemma is technical, and we include it in the appendix. In what follows, we will set $R = d+t$ and 
    \begin{equation}
        \label{eq:gamma_n}
    \gamma_{n}(t):=\delta_{n} +  K \l( \frac{\l(d+t\r)^3}{n} + \l(\frac{d^2+t^2}{n}\r)^{1/4}\sqrt{d\log(n)} \r).
    \end{equation}
 Moreover, let 
 \[
 \wh{z}=\arg\max_{z\in\mathbb{R}^d}W(z),\footnote{$\wh z$ is unique with probability 1, see Lemma \ref{lemma:auxiliary} (i).} \ B(\wh{z},\alpha):=\l\{z:\; \|z-\wh{z}\|_2\leq \alpha\r\}, \ \alpha>0,
 \]
 and define 
 \[
 \mathcal{U}(\alpha):=\mathcal{U}(\wh z,\alpha) = \l\{v\in\mathbb{S}^{d-1}:\; G(v)-\frac{1}{\sqrt{2\pi}} \langle v, z\rangle\leq W(\wh z)\text{ for some } z\in B(\wh{z},\alpha)\r\}.
 \]
The following lemma essentially states the following simple fact: if a concave function $g$ defined on a convex subset of $\mb R^d$ achieves its maximum $g_U$ at a unique point $t_\ast$, then diameter of the level sets $\{t\in \mb R^d: \ g(t)\geq g_U-\tau\}$ converges to $0$ as $\tau\to 0$. 
\begin{lemma}
\label{lemma:diameter}
Let $R(\beta) =  \{ \|z\|_2\in \mb R^d: \ W(z)\geq \sup_{z\in \mb R^d} W(z) - \beta \}$. Then there exists an event $\m E$ of probability at least $1-e^{-t}$ such that for any sequence $\beta_j = o(1)$ and any $\tau>0$, 
    \begin{equation}
    \label{diameter:claim}
    \lim_{j\to\infty} \mb P\l( \l\{\mathrm{diam}(R(\beta_j))\geq \tau\r\}\cap \m E \r) = 0. 
    \end{equation}
\end{lemma}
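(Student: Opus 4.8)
The plan is to split the claim into a deterministic reduction and a single probabilistic core. First I would record the structural properties of $W$: for every realization of the Brownian bridge $G$, the map $z\mapsto W(z)=\inf_{\|v\|_2=1}\l\{G(v,0)-\frac{1}{\sqrt{2\pi}}\dotp{z}{v}\r\}$ is an infimum of affine functions of $z$, hence concave; moreover, testing $v=z/\|z\|_2$ yields $W(z)\le G(z/\|z\|_2,0)-\frac{1}{\sqrt{2\pi}}\|z\|_2$, and since $G(\cdot,0)$ has a.s. bounded paths on the compact sphere, $W(z)\to-\infty$ as $\|z\|_2\to\infty$. Thus $\arg\max_z W(z)$ is a nonempty compact convex set, the superlevel sets $R(\beta)$ are compact, convex, nested and decreasing as $\beta\downarrow0$, and $\bigcap_{\beta>0}R(\beta)=\arg\max_z W=:R(0)$. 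Because nested compact sets converge to their intersection in the Hausdorff metric and the diameter is Hausdorff-continuous, $\mathrm{diam}(R(\beta))\downarrow\mathrm{diam}(R(0))$ as $\beta\downarrow0$.

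This reduces the lemma to showing that $R(0)$ is a.s. a single point. I would take $\m E$ to be the intersection of the event that $R(0)$ is a singleton with the confinement event $\{R(0)\subseteq B(0,C_1\sqrt{d+t})\}$; the latter has probability at least $1-e^{-t}$ by coercivity together with Gaussian concentration of $G(\cdot,0)$ on the sphere (the same confinement already invoked for $R_{n}(2\eps_n)$), while the former, I claim, has probability one. On $\m E$ we have $\mathrm{diam}(R(0))=0$, so $\mathrm{diam}(R(\beta))\downarrow0$ and, for any $\tau>0$ and any sequence $\beta_j=o(1)$, monotonicity in $\beta$ forces $I\{\mathrm{diam}(R(\beta_j))\ge\tau\}\,I_{\m E}\to0$ pointwise; the bounded convergence theorem then gives \eqref{diameter:claim}.

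The heart of the argument, and the main obstacle, is the a.s. uniqueness of the maximizer. Writing $g(v):=G(v,0)$ and $c:=\frac{1}{\sqrt{2\pi}}$, I would first record the first-order description: $z^\ast$ maximizes $W$ iff $0\in\partial W(z^\ast)=-c\,\mathrm{conv}\{v:\ v\in V(z^\ast)\}$, where $V(z^\ast)=\arg\min_{\|v\|_2=1}\l\{g(v)-c\dotp{z^\ast}{v}\r\}$ is the active set. If the maximizer were not unique, concavity would force $R(0)$ to contain a segment in some direction $w\ne0$, and a short computation (an affine function attaining its minimum over a segment at an interior point is constant on it) shows that the active set is a common set $V_\ast$ at every interior point, with $V_\ast\subseteq w^{\perp}$ and $0\in\mathrm{conv}(V_\ast)$. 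Equivalently, in the dual picture $-W$ is the Legendre transform $z\mapsto\sup_{\|v\|_2=1}\l\{c\dotp{z}{v}-g(v)\r\}$, and non-uniqueness means the convex hull of the lifted path $\{(v,g(v)):\|v\|_2=1\}\subset\mb R^{d+1}$ has a supporting hyperplane over the origin whose contact directions fail to positively span $\mb R^d$.

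It then remains to prove that this degenerate configuration has probability zero, which is where the real work lies. The strategy is to exploit the non-degeneracy of $G$: for any finite family of distinct half-spaces the associated increments have a non-degenerate Gaussian law, so the finite-dimensional distributions of $g$ are absolutely continuous. By Carathéodory's theorem the degenerate event can be described through at most $d$ active directions, a direction $w$, and an affine functional, whereupon an anti-concentration (Gaussian small-ball / density) bound shows each fixed coincidence is null. The difficulty is precisely the passage from ``each fixed degenerate configuration is null'' to ``no degenerate configuration occurs,'' since $z^\ast$ and $w$ are random and range over a continuum; I would handle this by discretizing the sphere of candidate directions, bounding the probability that an approximate coincidence survives, and letting the mesh tend to zero, using the a.s. continuity of $g$. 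Alternatively, one may invoke the a.s. uniqueness of the deepest point of this limiting process from the asymptotic analysis of the Tukey median \citep{masse2002asymptotics}, which yields $\mathrm{diam}(R(0))=0$ directly.
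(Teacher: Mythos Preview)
Your proposal is correct and follows essentially the same route as the paper. Both arguments rest on the same two ingredients: (a) the almost-sure uniqueness of $\wh z=\arg\max_z W(z)$, and (b) the observation that the nested compact superlevel sets $R(\beta)$ shrink to $\{\wh z\}$ as $\beta\downarrow 0$. You run this forward (Hausdorff convergence of nested compacts, then bounded convergence for the indicators), while the paper argues by contradiction: if a subsequence had $\mb P(\{\mathrm{diam}(R(\beta_{j_i}))\ge\tau\}\cap\m E)\ge c>0$, nestedness would force $\mathrm{diam}\l(\bigcap_i R(\beta_{j_i})\r)\ge\tau$ with positive probability, contradicting uniqueness. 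These are equivalent packagings of the same idea. Your long middle section attempting to re-derive the a.s.\ uniqueness of $\wh z$ is unnecessary: the paper simply cites it (Lemma~3.8 in \citep{masse2002asymptotics}), which is exactly the alternative you yourself offer at the end. The discretization sketch you give for uniqueness is not fully justified as written---the passage from ``each fixed degenerate configuration is null'' to ``no random configuration occurs'' requires a quantitative small-ball estimate uniform over the mesh, not just a.s.\ continuity of $g$---so it is cleanest to invoke Mass\'e directly, as both you and the paper ultimately do.
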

\begin{proof}
Lemma \ref{lemma:auxiliary} (i) guarantees that $\|\wh z\|_2\leq C\sqrt{d+t}$ on event $\m E$ of probability at least $1-e^{-t}$. Assume that \eqref{diameter:claim} is false, meaning that for some increasing sequence $\{j_i\}_{i\geq 1}$, 
\[
\lim_{i\geq 1}\mb P\l( \l\{\mathrm{diam}(R(\beta_{j_i}))\geq \tau \r\}\cap \m E\r) \geq c>0.
\]
Since the sets $R(\beta_{j_i})$ are nested, it implies that 
\[
\mb P\l( \l\{\mathrm{diam}\l(\bigcap_{i\geq 1} R\l(\beta_{j_i}\r)\r)\geq \tau \r\}\cap \m E\r)>0.
\]
But any element of the set $\bigcap_{i\geq 1} R(\beta_{j_i})$ must belong to $\arg\max_{z} W(z)$, contradicting the a.s. uniqueness of the latter. 
The claim follows. 
\end{proof}
Next, we state another fact regarding the upper level sets of the process $W(z)$. 
\begin{lemma}
 \label{lemma:diameter-new}
    Suppose that there exists $\tau>0$ such that for any $z\in\mathbb{R}^d$, we can find $u\in\mathcal{U}(\alpha)$ and $z_0\in B(\wh z,\alpha)$ satisfying
    \[
    \frac{u^{\top}(z-z_0)}{\|z-z_0\|_2}> \tau \text{ and } G(u)-\frac{1}{\sqrt{2\pi}}\langle u,z_0\rangle\leq W(\wh{z}).
    \]
    Then for any $\phi>0$, 
    \begin{align*}
        \l\{z: \,W(z)\geq \sup_{z\in \mb R^d} W(z) - \phi\r\}\subseteq\l\{z: \;\l\|z-\wh{z} \r\|_2\leq \alpha+\frac{\sqrt{2\pi}}{\tau}\phi\r\}.
    \end{align*}
\end{lemma}

\begin{proof}
    Since $\wh{z}\in B(\wh{z},\alpha)$ for any $\alpha>0$, 
    \begin{align*}
        \l\{z: \,W(z)\geq \sup_{z\in \mb R^d} W(z) - \phi\r\}\subseteq \l\{z: \,
            W(z)\geq \inf_{z\in B(\wh{z},\alpha)} W(z) - \phi\r\}.
    \end{align*}
    Take any $z$ such that $\|\wh{z}-z_0\|_2\leq \alpha$. By assumption of the lemma, there exists $v\in\mathcal{U}(\alpha)$ and $z_0\in B(\wh z, \alpha)$ satisfying 
    \[
    \frac{v^{\top}(z-z_0)}{\|z-z_0\|_2}>\tau,\quad G(v)-\frac{1}{\sqrt{2\pi}} \langle v, z_0\rangle\leq W(\wh{z}).
    \]
    Therefore,
    $G(v)-\frac{1}{\sqrt{2\pi}} \langle v, z_0\rangle\leq G(v)-\frac{1}{\sqrt{2\pi}} \langle v, z\rangle+\phi$, leading to the inequality 
    \[
    \|z-z_0\|_2\leq \sqrt{2\pi}\frac{\phi}{\tau}.
    \]
    Since $\|\wh{z}-z_0\|_2\leq \alpha$, the claim follows.
\end{proof} 
We will now deduce the claims of the theorem.  
It follows from the facts established above that there exists an event $\Theta$ of probability at least $1-\frac{2}{n}-2e^{-t}$ such that the claims of Lemma \ref{lemma:rate} holds on $\Theta$, and moreover the condition $R_{n}(2\eps_n)\subseteq B\l(0,C_1\sqrt{\frac{d+t}{n}}\r)$ is satisfied. 
Therefore, on $\Theta$
    \begin{multline}
    \label{eq:R_n}
            R_n(\eps_n) = \l\{\frac{z}{\sqrt n}: \|z\|_2\leq C_1\sqrt{d+t} \text{ and } W_{n}(z)\geq \sup_{z\in \mb R^d} W_{n}(z) - 2\sqrt{n}\eps_{n}\r\}
            \\
            \subseteq \l\{\frac{z}{\sqrt{n}}: \|z\|_2\leq C_1\sqrt{d+t}, \,
            W(z)\geq \sup_{z\in \mb R^d} W(z) - 2\sup_{\|z\|\leq C_1\sqrt{d+t}}\l|W_{n}(z)-W(z)\r| - 2\sqrt{n}\eps_{n}\r\}
            \\
            \subseteq\l\{\frac{z}{\sqrt{n}}: \|z\|_2\leq C_1\sqrt{d+t} \text{ and } W(z)\geq \sup_{z\in \mb R^d} W(z) - 2\gamma_n(t) - 2\sqrt{n}\eps_{n}\r\}:=\m A_n.
    \end{multline}
 Assume that $\eps_n=o(n^{-1/2})$, whence $\gamma_n(t) +\sqrt{n}\eps_n=o(1)$ as $n\to\infty$, implying that
    \[
    \sqrt{n}\,\mathrm{diam}(\m A_n)\cdot I_{\Theta} \to 0 \text{ in probability as } n\to\infty. 
    \]
  Since $t$ can be arbitrarily large, the claim \textbf{(ii)} of the theorem follows from Lemma \ref{lemma:diameter}.
    
  To establish claim \textbf{(i)}, we will recall part (iii) of Lemma~\ref{lemma:auxiliary} which states that with probability $1$, for any $z\in\mathbb{R}^d$ we can find $u\in\mathcal{U}(\alpha)$ and $z_0\in B(\wh z,\alpha)$ such that $\frac{u^{\top}(z-z_0)}{\|z-z_0\|_2}> 0$ and 
  \[
  G(u)-\frac{1}{\sqrt{2\pi}}\langle u,z_0\rangle\leq W(\wh{z}).
  \]
  It implies that for any $\alpha>0$ and $\beta>0$, there exists $c(\alpha,\beta)>0$ such that $\frac{u^{\top}(z-z_0)}{\|z-z_0\|_2}> c(\alpha,\beta)$ with probability at least $1-\beta$. Therefore, in view of Lemma~\ref{lemma:diameter-new}
\begin{align*}
R_n(\eps_n) \subseteq \l\{ z: \ \l\|z- \frac{\wh z}{\sqrt n} \r\|_2 \leq \frac{4}{c(\alpha,\beta)\sqrt{n}}(\gamma_n(t)\vee \sqrt{n}\eps_n) +\frac{\alpha}{\sqrt{n}}\r\}
\end{align*} 
with probability at least $1-\frac{2}{n}-2e^{-t}-\beta$.
Taking $\alpha=1/2$, we conclude that with the same probability,
\[
\operatorname{diam}(R_n(\eps_n)) \leq \frac{1}{\sqrt{n}} + \frac{4}{c(\alpha,\beta)}\l(\eps_n\vee\frac{\gamma_n(t)}{\sqrt n}\r).
\]
To deduce claim \textbf{(i)} of the theorem, it remains to observe that for $d\geq 3$, the dominating term in the expression \eqref{eq:gamma_n} for $\gamma_n(t)$ is $\delta_n$. 

Finally, to prove part \textbf{(iii)}, we use similar reasoning as before combined with the claim \textbf{(ii)} of Lemma \ref{lemma:auxiliary}. The latter states that with probability $1$, $\sup\limits_{w\ne 0}\min\limits_{u\in \m U} u^{\top}\frac{w}{\|w\|_2} < 0$ where $\m U$ is the set of extreme points of $\partial W(\wh z)$. In essence, it means that $0$ is an interior point of the subdifferential $\partial W(\wh z)$, implying that $W(z)$ decays linearly away from its maximum achieved at $\wh z$. To see this, note that in view of Lemma \ref{lemma:auxiliary} \textbf{(ii)}, for any $\beta>0$ there exists $c(\beta)>0$ such that 
\[
\min\limits_{u\in \m U} u^{\top}\frac{w}{\|w\|_2} \leq -c(\beta)
\]
with probability at least $1-\beta$. The trajectories of $W(z)$ are concave with probability $1$ as the pointwise infimum of linear functions, therefore, subgradient inequality yields that for any $z\ne \wh z$ and any $u\in \m U$,
\[
W(z) \leq W(\wh z) + u^{\top}(z - \wh z). 
\]
Choosing $u=u(z)\in \m U$ for which $u^{\top}(z - \wh z)$ is minimized, we deduce that 
\[
W(z) \leq W(\wh z) - c(\beta)\|z - \wh z\|_2
\]
with probability at least $1-\beta$. In view of \eqref{eq:R_n}, this yields that 
\begin{align*}
R_n(\eps_n) &\subseteq \l\{ \frac{z}{\sqrt{n}}:\  c(\beta)\|z - \wh z\|_2\leq 2\gamma_n(t) + 2\sqrt{n}\eps_{n} \r\}
\\
&\subseteq \l\{ z: \ \l\|z- \frac{\wh z}{\sqrt n} \r\|_2 \leq \frac{4}{c(\beta)\sqrt{n}}\l(\gamma_n(t) \vee \sqrt{n}\eps_n\r) \r\}
\end{align*}
on event of probability at least $1-2e^{-t}-\frac{2}{n}-\beta$. Setting $d=2$ in the definition \eqref{eq:gamma_n} of $\gamma_n(t)$ and observing that the term $C\frac{\log^{1/2}(n)}{n^{1/4}}$ dominates in this case, we obtain the desired conclusion. 


\section{Lower bounds for the diameter of the empirical depth regions}
\label{section:lower}

The goal of this section is to establish lower bounds on the diameter of the central regions $R_n(\eps)$, and to show that the upper bounds obtained before for the case $d=2$ are optimal in some regimes. To this end, we prove a bound on the modulus of continuity of the empirical depth process: in essence, it states that $D_n(z)$ behaves like a Lipschitz-continuous function at sufficiently large ``resolution levels.'' 
\begin{theorem}
\label{th:depth:difference}
With probability at least $1-e^{-t}$, 
\begin{multline*}
\sup_{\|z_1-z_2\|_2\leq \eps}\l| D_n(z_1) - D_n(z_2) \r| \leq \frac{\eps}{\sqrt{2\pi}}
\\
+C_1\l( \sqrt{\eps}\l(\sqrt{\frac tn}+\sqrt{\frac dn}\log^{1/2}\l( \frac{C_2}{\eps}\r)\r) + \frac{1}{n}\l( t+d\log\l( \frac{C_2}{\eps}\r)\r)\r).
\end{multline*}
\end{theorem}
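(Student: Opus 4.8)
The plan is to separate the deterministic (population) modulus of continuity of the depth from the fluctuations of the centered empirical process, and to exploit that the latter is a \emph{localized} (small-variance) empirical process indexed by thin slabs. Write $\nu_n(z,u) := \l(P^X_n - P\r)\HS{z}{u}$ for the centered process, where $P=N(0,I_d)$, so that $D_n(z) = \inf_{\|u\|_2=1}\l[ P\HS{z}{u} + \nu_n(z,u)\r]$. For fixed $z_2$, let $u_\ast$ attain the infimum defining $D_n(z_2)$ (attained because $P^X_n\HS{z}{u}$ takes finitely many values). Plugging $u_\ast$ into $D_n(z_1)$ gives
\[
D_n(z_1) - D_n(z_2) \leq \l[ P\HS{z_1}{u_\ast} - P\HS{z_2}{u_\ast}\r] + \l[ \nu_n(z_1,u_\ast) - \nu_n(z_2,u_\ast)\r],
\]
and the roles of $z_1,z_2$ may be exchanged. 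Since $P\HS{z}{u} = 1-\Phi(\dotp{z}{u})$ for $\|u\|_2=1$ and $\sup_t\phi(t)=\phi(0)=1/\sqrt{2\pi}$, the first bracket is at most $\tfrac{1}{\sqrt{2\pi}}\,|\dotp{z_1-z_2}{u_\ast}|\leq \tfrac{\delta}{\sqrt{2\pi}}$, which produces the leading term. It remains to bound the increment $\nu_n(z_1,u)-\nu_n(z_2,u)$ uniformly.

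Second, I would recognize this increment as an empirical process over slabs. For $\|u\|_2=1$ and reals $a\leq b$ with $b-a\leq\delta$, set $g_{u,a,b}(x)=I\{a\leq \dotp{x}{u}<b\}$; then $\nu_n(z_1,u)-\nu_n(z_2,u)=\pm(P^X_n-P)g_{u,a,b}$ with $\{a,b\}=\{\dotp{z_1}{u},\dotp{z_2}{u}\}$ and $b-a=|\dotp{z_1-z_2}{u}|\leq\delta$. Hence
\[
\sup_{\|z_1-z_2\|_2\leq\delta}\,\sup_{\|u\|_2=1}\l|\nu_n(z_1,u)-\nu_n(z_2,u)\r| \leq \sup_{g\in\m G_\delta}\l|(P^X_n-P)g\r|,\quad \m G_\delta:=\{g_{u,a,b}:\ \|u\|_2=1,\ 0\leq b-a\leq\delta\}.
\]
The class $\m G_\delta$ has envelope $1$; it is contained in the class of all slabs, which is VC of index $O(d)$ (each slab is an intersection of two half-spaces, and half-spaces in $\mb R^d$ form a VC class of index $d+1$), so its uniform covering numbers satisfy $\log N(s,\m G_\delta,L_2(Q))\lesssim d\log(C/s)$. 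Crucially, since each $g$ is an indicator, $Pg^2=Pg=\Phi(b)-\Phi(a)\leq \delta\,\phi(0)=\delta/\sqrt{2\pi}=:\sigma^2$ for \emph{every} $u,a,b$. This uniform, location-independent variance bound is exactly what allows the final estimate to hold over all of $\mb R^d$ with no localization of $z_1,z_2$.

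Third, I would control $\sup_{g\in\m G_\delta}|(P^X_n-P)g|$ by combining a localized maximal inequality with Talagrand's concentration inequality. Localizing the Dudley entropy integral at the scale $\sigma$ gives
\[
\mb E\sup_{g\in\m G_\delta}\l|(P^X_n-P)g\r| \lesssim \sqrt{\frac{d\,\sigma^2\log(C_2/\sigma)}{n}} + \frac{d\log(C_2/\sigma)}{n} \lesssim \sqrt{\frac{d\,\delta\log(C_2/\delta)}{n}} + \frac{d\log(C_2/\delta)}{n},
\]
using $\sigma^2=\delta/\sqrt{2\pi}$. Talagrand's inequality (Bousquet's form), with variance proxy $\sigma^2$ and envelope $1$, upgrades this to a bound valid with probability at least $1-e^{-t}$, adding the terms $\sqrt{\sigma^2 t/n}$ and $t/n$; the cross term $\sqrt{(\mb E\sup)\,t/n}$ is absorbed into $\mb E\sup+t/n$ via $\sqrt{ab}\leq a+b$. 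Collecting the four terms and rewriting $\sqrt{d\delta\log(C_2/\delta)/n}=\sqrt{\delta}\,\sqrt{d/n}\,\log^{1/2}(C_2/\delta)$ and $\sqrt{\sigma^2 t/n}\asymp\sqrt{\delta}\,\sqrt{t/n}$ reproduces the claimed bound.

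The hard part will be the third step: extracting the sharp $\sqrt{\delta}$ scaling (rather than the crude $\sqrt{d/n}$ one would get from naively bounding $\sup_z|D_n(z)-D_P(z)|$) requires marrying the small, location-uniform variance $\sigma^2\asymp\delta$ with the $O(d)$ VC entropy inside a version of Talagrand's inequality with explicit numerical constants. Particular care is needed to ensure the logarithmic factor emerges as $\log(C_2/\delta)$ rather than $\log n$, which hinges on truncating the entropy integral at the scale $\sigma\asymp\sqrt{\delta}$ instead of integrating up to the envelope, and on verifying that the stability of VC classes under intersections/differences keeps the uniform entropy at $O(d)$.
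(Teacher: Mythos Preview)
Your proposal is correct and follows essentially the same route as the paper: the paper also splits $D_n(z_1)-D_n(z_2)$ into a population increment (bounded by $\delta/\sqrt{2\pi}$ via the same Lipschitz bound on $\Phi$) and a localized empirical-process increment, then bounds the latter using the variance estimate $P(H(z_1,v)-H(z_2,v))^2\leq\delta/\sqrt{2\pi}$, the VC bound on the expected supremum (the paper cites Theorem~3.12 in Koltchinskii's lecture notes for the localized entropy bound you describe), and Bousquet's version of Talagrand's inequality. The only cosmetic difference is that you package the increment as an indicator of a thin slab $g_{u,a,b}$, whereas the paper works directly with the signed difference $I_{H(z_1,v)}-I_{H(z_2,v)}$; the resulting variance and entropy estimates coincide.
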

The following corollary states that the central regions of empirical Tukey's depth are not too small: in particular, diameter of $R_n(\eps)$ grows at least linearly with respect to $\eps$ when $\eps$ is above a certain small threshold. 
Given two subsets $A,B$ of $R^d$, $A+B = \l\{ a+b, \ a\in A, \ b\in B\r\}$ stands for their Minkowski sum. 
\begin{corollary}
\label{corollary:diameter}
There exist absolute constants $c,c_1,C_3$ with the following property. Assume that $\frac{d+t}{n}\leq c$ for $c$ small enough. Then for any $\eps \geq C_3\frac{d+t}{n}\log\l(\frac{n}{d+t}\r)$, 
\[
R_n(\eps)\supseteq R_n(0) + B(0, c_1\eps)
\]
with probability at least $1-e^{-t}$, where $R_n(0) = \arg\max_z D_n(z)$ is the Tukey's set. 
\end{corollary}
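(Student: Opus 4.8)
The plan is to reduce the inclusion directly to the modulus-of-continuity estimate of Theorem~\ref{th:depth:difference}. Write $\wh d_n = \max_z D_n(z)$ for the maximal empirical depth, so that $R_n(0) = \{z:\ D_n(z) = \wh d_n\}$ and $R_n(\eps) = \{z:\ D_n(z)\ge \wh d_n - \eps\}$. Since $R_n(0) + B(0,c_1\eps) = \{z_0 + w:\ z_0\in R_n(0),\ \|w\|_2\le c_1\eps\}$, to prove the inclusion it suffices to show that for every Tukey median $z_0\in R_n(0)$ and every shift $w$ with $\|w\|_2\le c_1\eps$ one has $D_n(z_0+w)\ge \wh d_n - \eps$; equivalently, because $D_n(z_0)=\wh d_n$, that $D_n(z_0) - D_n(z_0+w)\le \eps$.

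First I would invoke Theorem~\ref{th:depth:difference} with $\delta = c_1\eps$. On the event of probability at least $1-e^{-t}$ on which that theorem holds, \emph{every} pair $z_1,z_2$ with $\|z_1-z_2\|_2\le c_1\eps$ satisfies
\[
|D_n(z_1) - D_n(z_2)| \le \frac{c_1\eps}{\sqrt{2\pi}} + C_1\l( \sqrt{c_1\eps}\l(\sqrt{\tfrac tn}+\sqrt{\tfrac dn}\log^{1/2}\l( \tfrac{C_2}{c_1\eps}\r)\r) + \frac{1}{n}\l( t+d\log\l( \tfrac{C_2}{c_1\eps}\r)\r)\r).
\]
In particular this applies to the pair $(z_0,\,z_0+w)$, so it remains only to choose the constants $c_1,C_3$ so that the right-hand side is at most $\eps$ whenever $\eps\ge C_3\frac{d+t}{n}\log\l(\frac{n}{d+t}\r)$.

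The heart of the matter is then a bookkeeping estimate, and it is the only place where the precise form of the threshold enters. I would pick $c_1$ small enough that $c_1/\sqrt{2\pi}\le 1/2$, so the leading drift term already fits within half of the budget $\eps$. The remaining fluctuation terms are lower order under the standing assumption: using $\eps\ge C_3\frac{d+t}{n}\log\l(\frac{n}{d+t}\r)$ together with $\frac{d+t}{n}\le c$, one checks that $\log\l(C_2/(c_1\eps)\r)\lesssim \log\l(n/(d+t)\r)$, whence $\frac1n\l(t+d\log(C_2/(c_1\eps))\r)\lesssim \frac{d+t}{n}\log\l(\frac{n}{d+t}\r)\le \eps/C_3$, and, after squaring, $\sqrt{c_1\eps}\cdot\sqrt{\tfrac{d+t}{n}}\,\log^{1/2}(C_2/(c_1\eps)) \lesssim \sqrt{\eps\cdot \tfrac{d+t}{n}\log(\tfrac{n}{d+t})}\le \sqrt{\eps}\cdot\sqrt{\eps/C_3}$. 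Taking $C_3$ large enough drives both contributions below $\eps/4$, so the whole right-hand side is at most $\eps$; this gives $D_n(z_0)-D_n(z_0+w)\le \eps$ and hence the desired inclusion.

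I expect the principal (and really the only) delicate point to be matching the $\sqrt{\delta}\,\sqrt{(d+t)/n}\,\log^{1/2}(C_2/\delta)$ fluctuation term against the linear budget $\eps$: after substituting $\delta = c_1\eps$ and squaring, this is precisely the condition $\eps\gtrsim \frac{d+t}{n}\log\l(\frac{n}{d+t}\r)$, which explains why the threshold in the statement takes this particular form. No additional probabilistic work is needed beyond Theorem~\ref{th:depth:difference}, since the supremum there is already uniform over all admissible pairs, so a single application on a single high-probability event simultaneously covers every Tukey median $z_0$ and every shift $w$.
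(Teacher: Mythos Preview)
Your proposal is correct and follows essentially the same approach as the paper: apply Theorem~\ref{th:depth:difference} with $\delta=c_1\eps$ and check that, under the stated threshold on $\eps$, the linear term $\delta/\sqrt{2\pi}$ dominates the fluctuation terms so the total modulus of continuity is at most $\eps$. The paper's proof is a terse two-sentence version of exactly this argument; your bookkeeping spelling out why each remaining term is $O(\eps/\sqrt{C_3})$ is accurate and matches the intended reasoning.
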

\noindent In particular, we immediately see for $d=2$,
\[
\mathrm{diam}(R_n(\eps))\geq 2c_1\eps
\] 
holds for $\eps$ larger than $\frac{C}{n}\log(n)$. On the other hand, result of Theorem \ref{th:diameter} part \textbf{(iii)} states that with high probability,
\[
\mathrm{diam}(R_n(\eps))\leq C'\eps
\]
whenever $\eps \geq C'' n^{-3/4}\log^{1/2}(n)$. Therefore, for $\eps$ that exceeds $C'' n^{-3/4}\log^{1/2}(n)$, the bound of Theorem \ref{th:diameter} \textbf{(iii)} is sharp. 
\begin{proof}[Proof of Corollary \ref{corollary:diameter}]
It is easy to see that whenever $\eps \geq C_3\frac{d+t}{n}\log\l(\frac{n}{d+t}\r)$, the term $\frac{\eps}{\sqrt{2\pi}}$ dominates in the bound of Theorem \ref{th:depth:difference}. Therefore, $D_n(z) \geq D_n(\wh\mu_n) - \eps$ whenever $\|z-\wh\mu_n\|_2\leq c_1\eps$ for a sufficiently small constant $c_1>0$.
\end{proof}

\begin{proof}[Proof of Theorem \ref{th:depth:difference}]
Given $z_1,z_2\in \mb R^d$ such that $\|z_1-z_2\|_2\leq \eps$, let $v_1,v_2$ be unit vectors such that $D_n(z_j) = P_n^X H(z_j,v_j)$, $j=1,2$. 
Assuming without loss of generality that $D_n(z_1)\geq D_n(z_2)$, note that 
\begin{multline}
\label{eq:depth:difference}
D_n(z_1) - D_n(z_2) = \underbrace{P_n^X \l(H(z_1,v_1) - H(z_1,v_2)\r)}_{\leq 0} + P_n\l( H(z_1,v_2) - H(z_2,v_2)  \r) 
\\
\leq \sup_{\|v\|_2=1,\|z_1-z_2\|\leq \eps} \l| (P_n-P)(H(z_1,v)-H(z_2,v))\r| 
\\
+ \sup_{\|v\|_2=1,\|z_1-z_2\|\leq \eps} \l| P(H(z_1,v)-H(z_2,v))\r|.
\end{multline}
We will estimate both terms on the right side of the display above. 
Denote 
\[
F_n(z,v) = \sqrt{n}(P_n^X - P)H\l(z,v\r).
\] 
The following lemma controls the modulus of continuity of $F_n(z,v)$. 
\begin{lemma} 
\label{lemma:halfspace difference}
There exist absolute constants $C_1,C_2>0$ such that for any $\eps>0$,
    \begin{multline*}
        \sup\limits_{\|z_1-z_2\|_2\leq \eps, \|v\|_2=1} \l|F_n(z_1,v) - F_n(z_2,v)\r|
        \\
        \leq C_1\l( \sqrt{\eps}\l(\sqrt{t}+\sqrt{d}\log^{1/2}\l( \frac{C_2}{\eps}\r)\r) + \frac{1}{\sqrt{n}}\l( t+d\log\l( \frac{C_2}{\eps}\r)\r)\r)
    \end{multline*}
with probability at least $1-e^{-t}$.
\end{lemma}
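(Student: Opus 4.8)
The plan is to recognize the quantity as the supremum of an empirical process indexed by a VC-type class of small-variance functions, bound its expectation by a localized maximal inequality, and then upgrade to a high-probability bound via Talagrand's concentration inequality. First I would introduce the class
\[
\m F_\delta = \l\{ I_{\HS{z_1}{v}} - I_{\HS{z_2}{v}}:\ \|z_1-z_2\|_2\leq \delta,\ \|v\|_2=1 \r\},
\]
so that $F_n(z_1,v)-F_n(z_2,v)=\sqrt n\,(P^Z_n-P)f$ for $f\in\m F_\delta$. Two structural facts drive the argument. Since $\HS{z_1}{v}$ and $\HS{z_2}{v}$ share the normal direction $v$, one of them contains the other and $f$ is, up to sign, the indicator of a slab of width $|\dotp{z_1-z_2}{v}|\leq\delta$. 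As $\dotp{X}{v}\sim N(0,1)$ has density bounded by $1/\sqrt{2\pi}$, this gives the \emph{uniform} variance bound $\sup_{f\in\m F_\delta}Pf^2\leq \delta/\sqrt{2\pi}=:\sigma^2$, with no spatial localization in $z_1,z_2$ required. Moreover, $\m F_\delta$ is contained in the class of differences of indicators of half-spaces, which is a suitably measurable VC-type class of index $O(d)$ (half-spaces have VC dimension $d+1$), so its uniform $L_2$ covering numbers satisfy $N(\eps,\m F_\delta,L_2(Q))\lesssim (A/\eps)^{cd}$ for absolute constants $A,c$.

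Next I would control the expected supremum $\mb E\,\sup_{f\in\m F_\delta}|\sqrt n\,(P^Z_n-P)f|$ via the standard local maximal inequality for VC-type classes (of Giné--Guillou type). With envelope $1$, variance proxy $\sigma^2\asymp\delta$, and index $V\asymp d$, the entropy integral satisfies $\int_0^{\sigma}\sqrt{V\log(A/\eps)}\,d\eps\lesssim\sqrt V\,\sigma\sqrt{\log(1/\sigma)}$, which yields
\[
\mb E\,\sup_{f\in\m F_\delta}\l|\sqrt n\,(P^Z_n-P)f\r|\ \lesssim\ \sqrt{d\,\delta\,\log(1/\delta)}+\frac{d}{\sqrt n}\log(1/\delta).
\]
Since $\log(1/\sqrt\delta)\asymp\log(1/\delta)$ and $C_2$ can absorb the constant $A$, this already reproduces the two $d$-dependent terms of the statement, namely $\sqrt\delta\,\sqrt d\,\log^{1/2}(C_2/\delta)$ and $\frac1{\sqrt n}\,d\log(C_2/\delta)$.

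Finally I would pass from the mean to a deviation bound by applying Bousquet's form of Talagrand's inequality to $\m F_\delta$ with envelope $1$ and variance $\sigma^2=\delta/\sqrt{2\pi}$. This gives, with probability at least $1-e^{-t}$,
\[
\sup_{f\in\m F_\delta}\l|\sqrt n\,(P^Z_n-P)f\r|\ \leq\ \mb E\,\sup_{f\in\m F_\delta}\l|\sqrt n\,(P^Z_n-P)f\r| + C\l(\sigma\sqrt t+\frac{t}{\sqrt n}\r)+2\sqrt{\tfrac{t}{\sqrt n}\,\mb E\,\sup_{f\in\m F_\delta}|\sqrt n(P^Z_n-P)f|}.
\]
The term $\sigma\sqrt t\asymp\sqrt{\delta}\,\sqrt t$ and the term $t/\sqrt n$ supply exactly the two remaining $t$-dependent contributions, while the cross-term is subdominant by AM--GM, $2\sqrt{(t/\sqrt n)\,M}\leq M+t/\sqrt n$, and is thus absorbed into the terms already present. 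Collecting the four contributions gives the claimed inequality.

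The main obstacle is the second step: arranging the localized maximal inequality so that the logarithmic factor is $\log(1/\delta)$ (rather than $\log n$ or a cruder $\log(1/\sigma^2)$) and so that the variance-dependent term carries $\sqrt d$ while the $1/\sqrt n$ remainder carries the full $d$. This hinges precisely on (a) the variance being genuinely of order $\delta$ uniformly in $(z_1,z_2,v)$, which we obtained above without any restriction on the location of $z_1,z_2$, and (b) the VC index being $O(d)$; the remaining bookkeeping with Bousquet's inequality, including verifying that the cross-term is negligible, is routine.
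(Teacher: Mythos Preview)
Your proposal is correct and follows essentially the same route as the paper: compute the uniform variance bound $\sup_{f\in\m F_\delta}Pf^2\leq \delta/\sqrt{2\pi}$ via the slab argument, bound the expected supremum using the VC dimension $d+1$ of half-spaces and a localized maximal inequality (the paper cites Theorem~3.12 in Koltchinskii's lecture notes, yielding exactly your $\sqrt{d\delta}\log^{1/2}(C_2/\delta)+\frac{d}{\sqrt n}\log(C_2/\delta)$), and then apply Bousquet's version of Talagrand's inequality to add the $\sqrt\delta\sqrt t$ and $t/\sqrt n$ terms. The only cosmetic difference is ordering---the paper applies Bousquet first and then bounds $\mb E S$---and it states Bousquet's bound in the already-simplified form $S\leq C(\mb E S+\sqrt{\delta t}+t/\sqrt n)$, implicitly absorbing the cross-term you handle explicitly via AM--GM.
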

\begin{proof}
For brevity, set 
\[
S:= \sup\limits_{\|z_1-z_2\|_2\leq \eps, \|v\|_2=1}\l|F_n(z_1,v) - F_n(z_2,v)\r|,
\]
where the supremum is taken over all $z_1,z_2$ such that $\|z_1-z_2\|_2\leq \eps$ and all unit vectors $v$. 
Note that 
\begin{multline}
\label{eq:variance}
\sup\limits_{\|z_1-z_2\|_2\leq \eps, \|v\|_2=1} \mb E (H(z_1,v) - H(z_2,v))^2 
\\
=\sup\limits_{\|z_1-z_2\|_2\leq \eps}\sup_{\|v\|_2=1}\l| \int_{\dotp{z_2}{v}}^{\dotp{z_1}{v}} \phi(z) dz\r| 
\leq \sup\limits_{\|z_1-z_2\|_2\leq \eps}\frac{\|z_1-z_2\|_2}{\sqrt{2\pi}} = \frac{\eps}{\sqrt{2\pi}}
\end{multline}
where $\phi(z)$ is the probability density function of the standard normal law. Bousquet's form of Talagrand's concentration inequality (Theorem 7.3 in \cite{bousquet2003concentration}) yields that 
\[
S\leq C\l(\mb ES + \sqrt{\eps}\sqrt{t} + \frac{t}{\sqrt{n}} \r)  
\]
with probability at least $1-e^{-t}$. 
It remains to estimate $\mb ES$. Since the class of all halfspaces in $\mb R^d$ has Vapnik-Chervonenkis dimension $d+1$ \citep{wellner2023weak}, a well known argument based on Dudley's entropy integral (for instance, see Theorem~3.12 in \cite{koltchinskii2011oracle}) implies that 
\[
\mb ES \leq C_1\l( \sqrt{d\eps}\log^{1/2}\l( \frac{C_2}{\eps}\r) + \frac{d}{\sqrt{n}}\log\l( \frac{C_2}{\eps}\r)\r)
\]
for some absolute constants $C_1,C_2>0$. 
\end{proof}
\noindent Next, we deduce in a way similar to \eqref{eq:variance} that 
\[
\sup_{\|v\|_2=1,\|z_1-z_2\|\leq \eps} \l| P(H(z_1,v)-H(z_2,v))\r| \leq \frac{\eps}{\sqrt{2\pi}}.
\]
Therefore, \eqref{eq:depth:difference} implies that with probability at least $1-e^{-t}$, 
\begin{multline*}
\sup_{\|z_1-z_2\|_2\leq \eps}\l| D_n(z_1) - D_n(z_2) \r| 
\\
\leq \frac{\eps}{\sqrt{2\pi}}
+C_1\l( \sqrt{\eps}\l(\sqrt{\frac tn}+\sqrt{\frac dn}\log^{1/2}\l( \frac{C_2}{\eps}\r)\r) + \frac{1}{n}\l( t+d\log\l( \frac{C_2}{\eps}\r)\r)\r)
\end{multline*}
which is the desired bound.
\end{proof}

\section{Discussion}
\label{section:discussion}

We proved that Tukey's median, and more generally affine equivariant estimators of location, are sensitive to geometry of the underlying distribution, in a sense that their performance is controlled by the effective rank of the ``shape'' matrix rather than the ambient dimension. In the adversarial contamination framework, our claim remains valid as long as the contamination proportion $\eps$ is small, namely, at most $c(d) n^{-1/2}$. Therefore, there is still a gap between the bounds for Tukey's median proved in this paper and best known guarantees for alternative estimators. This fact motivates the following natural question: \emph{what are the best guarantees for Tukey's median in the adversarial contamination framework with $\eps_n$ of order $\sqrt{\frac{r(\Sigma)}{n}}$? }

We also showed that when the data are Gaussian, diameter of the empirical Tukey's set is of order $o_P(n^{-1/2})$, and that in the special case $d=2$, the diameter is $O_P\l(n^{-3/4}\log^{1/2}(n) \r)$. Formally following the steps of our analysis in the case $d=1$, it is possible to obtain the rate for the diameter of the set of medians of order $\frac{\log^{1/2}(n)}{n}$ which is sharp up to log-factors. This fact leads us to conjecture that in the case $d=2$, our result is also nearly sharp, and that for $d\geq 3$, the diameter is of order $n^{-\frac12 -\frac{1}{2(d-1)}}$ (again up to logarithmic factors). 

One of the main technical challenges on the path to answering the stated questions about Tukey's median and diameter of Tukey's set is related to the properties of the random concave function $W(z)$ defined in \eqref{def:W(z)} as the Legendre-Fenchel transform of a Brownian bridge (see the discussion preceding the proof of Theorem \ref{th:diameter}). Specifically, let $\wh z$ be the $\arg\max$ of $W(z)$. How fast does a typical realization of $W(\wh z - z)$ decrease as a function of $\|z - \wh z\|_2$ and dimension $d$? Letting $\partial W(\wh z)$ be the subdifferential of $W(z)$ at the point $\wh z$, we can invoke the subgradient inequality to deduce that for any $u\in \partial W(\wh z)$,
\[
W(z) \leq W(\wh z) + u^T(z - \wh z).
\]
Therefore, the question about the speed of decay of $W(z)$ can be reformulated as follows: what is a high-probability lower bound for a constant $c$ such that
\[
\sup_{w\ne 0}\min\limits_{u\in \m U} u^{\top}\frac{w}{\|w\|_2}\leq -c
\]
where $\m U$ is the set of extreme points of $\partial W(\wh z)$? What if the Brownian bridge is replaced by the empirical process indexed by halfspaces? 
Let us remark that if instead of the Brownian bridge $G(v)$ in the definition of $W_G(z)$ we consider the isonormal Gaussian process $Z(v) = v^T Z$ where $Z$ has standard Gaussian distribution in $\mb R^d$, then its Legendre-Fenchel transform is simply $W_Z(z) = -\|Z-z\|_2$ and $\wh z = Z$.

Some questions related to the properties of random convex/concave functions at their extrema have been studied in the univariate case (for instance, see the papers by \citet{balabdaoui2011grenander,groeneboom1983concave}), but no multivariate extensions are readily available to the best of our knowledge.

\medskip
\noindent\textbf{Acknowledgments.}
Authors thank V.-E. Brunel and G. Lecu\'{e} for their helpful suggestions that lead to improvements of parts of the manuscript.


\bibliographystyle{apalike}
\bibliography{ref}

\appendix

\section{Proof of Theorem~\ref{th:affine}.}
\label{section:proof:affine}

\noindent Define the random vector 
\[
W:=\Sigma^{-1/2}(\widetilde\mu_n(X_1,\ldots,X_n) - \mu),
\]
and observe that we need to estimate $\|\Sigma^{1/2}W\|_2$. Moreover, set $Z_j = \Sigma^{-1/2}(X_j - \mu), \ j=1,\ldots,n$. Let us record the following identity:  
\[
\Sigma^{1/2}W = \|W\|_2\Sigma^{1/2}\l[\frac{W}{\|W\|_2}\r],
\]
where $U:=\frac{W}{\|W\|_2}$ is defined as $0$ if $W=0$. We claim that conditionally on $W\ne 0$, $U$ has uniform distribution over the sphere of radius $1$. Indeed, for any orthogonal matrix $Q$, the vector $QZ_1$ has the same distribution as $Z_1$ in view of elliptical symmetry, which implies that 
$\widetilde\mu_n(QZ_1,\ldots,QZ_n)$ and $\widetilde\mu_n(Z_1,\ldots,Z_n)$ have the same distribution. But 
\begin{align*}
&\widetilde\mu_n(QZ_1,\ldots,QZ_n) = Q\widetilde\mu_n(Z_1,\ldots,Z_n), 
\\
&\widetilde\mu_n(Z_1,\ldots,Z_n) = \Sigma^{-1/2}(\widetilde\mu_n(X_1,\ldots,X_n) - \mu) = W,
\end{align*}
due to affine equivariance, hence $W \stackrel{d}{=} QW$.
It implies that the conditional distribution of $\frac{W}{\|W\|}$ given that $W\ne 0$ is indeed uniform.

The function $w\mapsto \|\Sigma^{1/2}w\|_2$ Lipschitz continuous with Lipschitz constant equal to $\|\Sigma\|^{1/2}$, therefore we 
can apply L\'{e}vy's lemma \citep{levy1951problemes}, again conditionally on $W\ne 0$, to get that 
\[
\|\Sigma^{1/2}U\|_2 \leq \mb E \|\Sigma^{1/2}U\|_2 + \sqrt{\frac{2\|\Sigma\|^{1/2}t}{d}}
\]
with probability at least $1-e^{-t}$ (this version of the inequality with sharp constants is due to \cite{aubrun2024optimal}). The same result also holds unconditionally since $U=0$ if $W=0$. It remains to observe that 
\[
\mb E\|\Sigma^{1/2}U\|_2 \leq \mb E^{1/2}\|\Sigma^{1/2}U\|^2_2 = \sqrt{\frac{\tr(\Sigma)}{d}},
\]
hence we conclude by the union bound that
\begin{equation*}
\|\Sigma^{1/2}W\|_2 = \|W\|_2 \|\Sigma^{1/2}U\|_2 
\leq e(n,t,d)\l( \sqrt{\frac{\tr(\Sigma)}{d}} + \sqrt{\frac{2\|\Sigma\|^{1/2}t}{d}}\r)
\end{equation*}
with probability at least $1-p(t) - e^{-t}$.

\begin{remark}
Spherical symmetry plays a crucial role in the proof of the theorem. Due to the affine invariance of the depth function, it is not possible to capture the dependence of the error on the effective rank by simply comparing the empirical and the true depth. For example, let $X_1,\ldots,X_n$ be i.i.d. $N(0,\Sigma)$ random vectors where $\Sigma$ is non-singular, and note that for any halfspace $H(0,u)$, $P(X_1\in \HS{0}{u})=\frac{1}{2}$. Therefore,  
\begin{equation*}
    \sup_{\|u\|_2=1}\left| \frac{1}{n}\sum_{j=1}^n I\{X_j \in \HS{0}{u}\} - \frac{1}{2} \right| 
    = \sup_{\|u\|_2=1}\left|  \frac{1}{n}\sum_{j=1}^n I\{\Sigma^{-1/2}X_j \in \HS{0}{u}\} - \frac{1}{2} \right|,
\end{equation*}
hence the distribution of the supremum does not depend on $\Sigma$.
\end{remark}

\section{Proof of Lemma \ref{lemma:rate}.}
\label{section:proof:rate}

The key step in the argument involves construction of a coupling between the empirical process 
\[
F_n(v)=\sqrt{n}(P_n^X - P)\HS{0}{v}
\] 
and the Brownian bridge $G(v)$ indexed by the indicator functions of halfspaces passing through the origin. We state this result next. 
\begin{lemma}
    \label{lemma:strong:approximation}
Let $d\geq 2$. For any $n\geq 1$, there exists a sequence $X_1,\ldots,X_{n}$ of independent standard normal random vectors and the Brownian bridge $G(v)$ defined on the same probability space such that
    \begin{equation}
    \label{eq:strongapprox}
    \sup_{\|v\|_2=1} \l| F_n(v) - G(v)\r| \leq \delta_{n}(t):=C(d) n^{-\frac{1}{2(d-1)}}\log^{\rho(d)}(n)\l( s+\log(n)\r)
    \end{equation}
    with probability at least $1-e^{-s}$, where $C(d)$ depends only on $d$ and $\rho(d)=\begin{cases}
        1, & d=2, \\ \frac{1}{2}, & d\geq 3
    \end{cases}$.
\end{lemma}
\begin{proof}
The claim follows from the strong approximation result by \citet[][Theorem 3.4]{koltchinskii1994komlos} once we verify that the required assumptions hold. First, recall that the Vapnik-Chervonenkis dimension of halfspaces passing through the origin equals $d$ \citep[see][]{wellner2023weak}, hence condition (3.9) in \cite{koltchinskii1994komlos} holds with $\rho = 1$; see \cite{dudle1987universal} for the details. 

Next, observe that
\begin{multline*}
\sup_{\|v\|_2=1}\l| \sqrt{n}(P_n^X-P)H\l(0,v\r)-G(v)\r|\\=\sup_{\|v\|_2=1}\l| \frac{1}{\sqrt n}\sum_{i=1}^n\l\{I\l\{\langle X_i,v\rangle\geq 0\r\}-\frac{1}{2}\r\}-G(v)\r|\\
\\=\sup_{\|v\|_2=1}\l| \frac{1}{\sqrt n}\sum_{i=1}^n\l\{I\l\{\l\langle \frac{X_i}{\|X_i\|_\infty},v\r\rangle\geq 0\r\}-\frac{1}{2}\r\}-G(v)\r|.
\end{multline*}
If $X\sim N(0,I_d)$, then with probability one, the random vector $Z:=\frac{X}{\|X\|_\infty}$ belongs to the boundary $\partial[-1,1]^d = \l\{ x\in \mb R^d: \ \|x\|_\infty = 1 \r\}$ of the unit cube with faces denoted by $F_{0},\ldots,F_{2d-1}$ (note that each $F_j$ is itself isometric to $[-1,1]^{d-1}$). Following \cite{koltchinskii1994komlos}, we will construct the binary expansion $\l\{\Delta_j\r\}_{j\geq 0}$ of $\partial[-1,1]^d$ that satisfies the following properties: 
\begin{enumerate}
    \item $\Delta_j = \l\{ \Delta_{j,k}: \ k=0,\ldots, 2d\cdot 2^j -1  \r\}$;
    \item $\Delta_{0} = \l\{ F_0,\ldots,F_{2d-1}\r\}$, $\Delta_{j,k}=\Delta_{j+1,2k} \cup \Delta_{j+1,2k+1}$, $\Delta_{j+1,2k} \cap \Delta_{j+1,2k+1} = \emptyset$ for $k=0,\ldots, k=0,\ldots, 2d\cdot 2^j -1$;
    \item $\mb{P}(Z\in\Delta_{j+1,2k})=\Delta_{j+1,2k+1}= \frac{2^{-(j+1)}}{2d}$. 
\end{enumerate}
Each $\Delta_{j,k}$ is a parallelepiped with sides parallel to the coordinate axes. Given $\Delta_{j,k}$, we construct $\Delta_{j+1,2k}$ and $\Delta_{j+1,2k+1}$ by choosing the longest side and splitting $\Delta_{j,k}$ along it so that the condition (3) is satisfied. Formally, suppose that $\Delta_{j,k}\subseteq F_0 = \{-1\}\times [-1,1]^{d-1}$ is such that 
$\Delta_{j,k} = \{-1\}\times [a_2,b_2]\times\cdots \times [a_d,b_d]$. Without loss of generality, assume that $|a_d,b_d|=\max_{i=2,\ldots,d}|b_i-a_i|$. Then 
\[
\Delta_{j+1,2k} = \{-1\}\times [a_2,b_2]\times\cdots \times [a_d,z_d], \quad \Delta_{j+1,2k+1} = \{-1\}\times [a_2,b_2]\times\cdots \times [z_d,b_d]
\]
where $z_d$ is such that $\mb{P}(Z\in\Delta_{j+1,2k})=\Delta_{j+1,2k+1}= \frac{2^{-(j+1)}}{2d}$. We claim that 
\[
\operatorname{diam}(\Delta_{j,k})\leq C(d) 2^{-j/(d-1)}.
\]
To this end, let $h_Z(\cdot)$ be the probability density function of $Z$ with respect to the standard volume measure on $\partial[-1,1]^{d}$. 
\begin{lemma}
\label{lemma:density:bound}
The following inequalities hold:
\begin{equation}
    \label{eq:density:bound:2}
\frac{1}{2^{d+1}\sqrt{\pi} d^{d/2}}=:L_h \leq \inf_{z\in \partial[-1,1]^d} h_Z(z) \leq \sup_{z\in \partial[-1,1]^d} h_Z(z) \leq U_h:=\frac{\Gamma(d/2)2^{d-1}}{2\pi^{d/2}}.
\end{equation}
\end{lemma}
\noindent The proof is given in Section \ref{proof:density:bound} below. Note that inequality \eqref{eq:density:bound:2} yields that 
\[
\frac{\sup_{z\in \partial[-1,1]^d} h_Z(z)}{\inf_{z\in \partial[-1,1]^d} h_Z(z)}\leq (Cd)^d
\]
for an absolute constant $C>0$. Moreover, given $\Delta_{j,k} = \{-1\}\times [a_2,b_2]\times\cdots \times [a_d,b_d]$, observe that 
\[
\frac{1}{2U_h} \leq \frac{z_d - a_d}{b_d - a_d} \leq \frac{1}{2L_h}
\]
where $a_i,b_i$ are defined same as before, whence $\frac{\max_{i=2,\ldots,d}|b_i-a_i|}{\min_{i=2,\ldots,d}|b_i-a_i|}\leq \frac{U_h}{L_h}\leq (Cd)^d$. Therefore,  
\begin{multline*}
\mb{P}\l(Z\in \Delta_{j,k}\r) = \frac{2^{-j}}{2d}\int\limits_{\prod_{i=2}^d [a_i,b_i]} h_Z(z) dz \geq L_h \prod_{i=2}^d |b_i-a_i| \geq L_h \l( \min_i |b_i - a_i|\r)^{d-1}
\\
\geq L_h\l( \frac{\max_i |b_i - a_i|}{(Cd)^d}\r)^{d-1},
\end{multline*}
and we deduce that 
\begin{align*}
\operatorname{diam}(\Delta_{j,k})&\leq (d-1)^{1/2}\max_{i=2,\ldots,d} |b_i - a_i| \leq 
(d-1)^{1/2} (Cd)^d(2d L_h)^{-\frac{1}{d-1}}  2^{-j/(d-1)}
\\
&=: C(d)\, 2^{-j/(d-1)}.
\end{align*}
Next, the binary expansion $\{\Delta_j, \ j\geq 0\}$ is associated with the Haar basis 
\begin{align*}
h_{0,k}(x) &= I\{x\in F_k\}, \ k=0,\ldots,2d-1, 
\\
h_{j,k}(x) & = 2^{j/2}\l( I\{x\in \Delta_{j+1,2k} \} - I\{x\in \Delta_{j+1,2k+1} \} \r), \ j\geq 1, \ k=0,\ldots,2d\cdot 2^j - 1.
\end{align*}
Given a function $g$ such that $\mb Eg^2(Z)<\infty$ and an integer $j\geq 1$, we define 
\[
g_j = \sum_{i=0}^{j-1}\sum_{k=0}^{2d\cdot 2^i-1}\langle g,h_{j,k}\rangle h_{j,k}
\]
where $\langle g_1,g_2\rangle = \mb E \l[g_1(Z) g_2(Z)\r]$. In order to apply Theorem 3.4 in \cite{koltchinskii1994komlos}, it remains to find an upper bound for the approximation error 
\[
\mb E \l( f_v(Z) - f_{v,j}(Z)\r)^2 \text{ where }f_v(Z):=I\l\{\l\langle Z,v\r\rangle\geq 0\r\}.
\]
To do so, we first observe that 
\begin{equation*}
    \mb E \l( f_v(Z) - f_{v,j}(Z)\r)^2 = \int\limits_{\partial[-1,1]^{d}} (f_v(z) - f_{v,j}(z))^2 h_Z(z)dz
        \leq U_h \int\limits_{\partial[-1,1]^{d}} (f_v(z) - f_{v,j}(u))^2 du.
\end{equation*}
Following \citet[][p.110]{koltchinskii1994komlos}, we deduce that 
\begin{multline*}
\int\limits_{\partial[-1,1]^{d}} (f_v(z) - f_{v,j}(u))^2 du \leq 
\sum_{k=0}^{2^{j+1}d-1}2^N d\int_{\Delta_{j,k}} \int_{\Delta_{j,k}} |f_v(z)-f_v(y)|^2 dz\; dy
\\
 \leq 2^N d \sup_{\|\mathbf{r}\|_2\leq C(d)2^{-j/(d-1)}}\int_{\partial[-1,1]^{d}} |f_v(z+\mathbf{r})-f_v(z)|^2 dz \leq C_1(d)\omega^2\l(f_v;C(d)2^{-j/(d-1)}\r)
\end{multline*}
where the modulus of continuity $\omega(f_v,h)$ is defined via
\begin{equation}
    \label{eq:modulus}
    \omega(f_v;h) = \sup_{\|\mathbf{r}\|_2\leq h}\l( \int_{\partial[-1,1]^{d-1}} (f_{v}(z + \mathbf{r}) - f_{v}(z))^2 dz\r)^{1/2}.
    \end{equation}
Without loss of generality, assume that $\langle \mathbf{r},v\rangle\geq 0$. It is easy to see that $(f_{v}(z + \mathbf{r}) - f_{v}(z))^2 = 1$ iff $\langle z,v \rangle\in [-\langle \mathbf{r},v\rangle,0]$. Assume that $d\geq 3$, and let $F$ be a face of $\partial[-1,1]^d$. Recall that K. Ball's theorem states that the largest possible $(d-2)$-dimensional) volume of the set $\mathcal A_t:=F\cap \l\{ z: \, \langle z,v \rangle = t \r\}$ is $2^{d-2}\sqrt{2}$. 
This immediately implies that  
\[
\int_{\partial[-1,1]^{d-1}} (f_{v}(z + \mathbf{r}) - f_{v}(z))^2 dz \leq 2^{d-1}d\sqrt{2}\l|\langle \mathbf{r},v\rangle\r|,
\]
hence $\omega^2(f_v;h) \leq 2^{d-1}d\sqrt{2}h$. In the case $d=2$, it is easy to see that the same conclusion holds since the length of a line segment $F\cap \l\{ z: \, \langle z,v \rangle \in [-\mathbf{r},0] \r\}$ is at most $\mathbf{r}$. 
Finally, Theorem 3.4 in \cite{koltchinskii1994komlos} applies with $\rho =1$ and $\beta = \frac{1}{d-1}$, and yields the desired inequality.
\end{proof}


We are now ready to proceed with the proof of the main claim of  Lemma \ref{lemma:rate}. Recall the definitions \eqref{def:W_n(z)} and \eqref{def:W(z)} of $W_n(z)$ and $W(z)$ respectively, and consider two possibilities: (a) $W_{n}(z) - W(z)\geq 0$ and (b) $W_{n}(z) - W(z) < 0$. Observe that the infimum in the definitions of $W_{n}(z)$ and $W(z)$ is attained, almost surely, at some unit vectors $v_{n}(z)$ and $v(z)$. We will only consider case (a) below since the reasoning in case (b) is very similar. The following relation is evident:
  \begin{multline}
    \label{eq:decomp}
        0\leq W_{n}(z) - W(z) = \sqrt{n}\l(\EHS{n}{\frac{z}{\sqrt{n}}}{v_n(z)}  - \frac12\r) - \l( G(v(z),0) - \frac{1}{\sqrt{2\pi}} \langle z, v(z)\rangle \r)
        \\
        = \underbrace{\sqrt{n}\l(\EHS{n}{\frac{z}{\sqrt{n}}}{v_{n}(z)} - \frac12 \r) - \sqrt{n}\l(\EHS{n}{\frac{z}{\sqrt{n}}}{v(z)}  - \frac12\r)}_{\leq 0} 
        \\
       +\underbrace{\sqrt{n}(P_n-P)\l(H\l(\frac{z}{\sqrt{n}},v(z)\r) - H\l(0,v(z)\r)\r)}_{=\mathrm{I}} + \underbrace{\sqrt{n}\l(\PHS{\frac{z}{\sqrt{n}}}{v(z)} - \frac12 \r) +  \frac{1}{\sqrt{2\pi}} \langle z, v(z)\rangle}_{=\mathrm{II}}
       \\
       +\underbrace{\sqrt{n}(P_{n}^X - P)\HS{0}{v(z)} - G\l(v(z),0\r)}_{=\mathrm{III}}
    \end{multline}
 We will now estimate the terms $\mathrm{I},\,\mathrm{II}$ and $\mathrm{III}$. First, we apply Lemma \ref{lemma:halfspace difference} to deduce that with probability at least $1-e^{-t}$,
 \begin{multline*}
     \sup_{\|z\|_2\leq R} \l|\sqrt{n}(P_n-P)\l(H\l(\frac{z}{\sqrt{n}},v(z)\r) - H\l(0,v(z)\r)\r)\r|  
     \\
     \leq 
     C\l( \frac{\sqrt{R}}{n^{1/4}}\l(\sqrt{t}+\sqrt{d}\log^{1/2}\l( \frac{C_1n}{R^2}\r)\r) + \frac{1}{\sqrt{n}}\l( t+d\log\l( \frac{C_1 n}{R^2}\r)\r)\r).
 \end{multline*}
 When $t=\log(2n)$, the last expression does not exceed 
 \(
 C\l(\frac{\sqrt{R}}{n^{1/4}}\sqrt{d\log(n)} + \frac{d\log(n)}{\sqrt{n}}\r).
 \) Note that when $n\geq Cd^2$ and $R\geq 1$, the first term in the sum dominates, and we get that with probability at least $1-\frac{1}{2n}$,
 \[
 \mathrm{I}\leq C\frac{\sqrt{R}}{n^{1/4}}\sqrt{d\log(n)}
 \]
    Recalling that all 1-dimensional projections of $X$ are standard normal, we deduce that 
    \begin{multline*}
    \mathrm{II} = \sqrt{n}\l(\PHS{\frac{z}{\sqrt{n}}}{v(z)} - \frac12 \r) +  \frac{1}{\sqrt{2\pi}} \langle z, v(z)\rangle 
    = \frac{1}{\sqrt{2\pi}} \langle z, v(z)\rangle  - \sqrt{n}\int_{0}^{\l\langle \frac{z}{\sqrt{n}}, v(z)\r\rangle}\phi(t)dt 
    \leq C\frac{\|z\|_2^3}{n}
    \end{multline*}
    where we used the fact that $\phi(t)\geq \frac{1}{\sqrt{2\pi}}(1-t^2)$ for small $t$. 
    Finally, inequality \eqref{eq:strongapprox} implies that 
    \[
    \mathrm{III}\leq\sup_{\|v\|_2=1}\l|\sqrt{n}(P_{n}^X - P)\HS{0}{v} - G\l(v\r)\r| \leq C(d) n^{-\frac{1}{2(d-1)}}\log^{1+\rho(d)}(n)
    \]
    with probability at least $1-\frac{1}{2n}$, where $\rho(d)=\begin{cases}
        1, & d=2, \\ \frac{1}{2}, & d\geq 3
    \end{cases}$.  
    In case (b), the reasoning is very similar. Collecting the estimates of each term in \eqref{eq:decomp}, we deduce the desired result.
\subsection{Proof of Lemma \ref{lemma:density:bound}.}
\label{proof:density:bound}

  Note that we can equivalently define the vector $Z = \frac{X}{\|X\|_\infty}$ as $Z = \frac{Y}{\|Y\|_\infty}$ where $Y = \frac{X}{\|X\|_2}$ so that $Y$ has uniform distribution on $S^{d-1}$. Next, given $x\in\mathbb{S}^{d-1}$ and $r>0$, set 
    \[
    U(x,r):=\l\{z\in\mathbb{S}^{d-1}:\; \|x-z\|_2\leq r\r\}.
    \]
    Let $V(x,r)\subset\partial[-1,1]^{d}$ be the image of $U(x,r)$ under the mapping $z\mapsto \frac{z}{\|z\|_\infty}$: 
    \begin{align*}
        V(x,r):=\l\{\frac{z}{\|z\|_{\infty}}: \ z\in U(x,r)\r\}.
    \end{align*}
    It is sufficient to consider $x$ such that $\frac{x}{\|x\|_\infty}$ belongs to the relative interior of a face of $\partial[0,1]^d$. The density $h_Z(x/\|x\|_\infty)$ satisfies the following: 
    \begin{equation}
    \label{eq:density:bound:1}
        h_Z\l(\frac{x}{\|x\|_\infty}\r)=\lim_{r\to 0}\frac{\mathbb{P} (Z\in V(x,r))}{\vol(V(x,r))}=
        \lim_{r\to 0}\frac{\mathbb{P} (Y\in U(x,r))}{\vol(U(x,r))}\cdot\frac{\vol(U(x,r)) }{\vol(V(x,r))},
    \end{equation}
    where $\vol(\cdot)$ denotes (with some abuse of notation) the volume measure on $\mb S^{d-1}$ or $\partial[-1,1]^{d}$ depending on the context. Clearly, 
    \[
    h_Y(x) := \lim_{r\to 0}\frac{\mathbb{P} (Y\in U(x,r))}{\vol(U(x,r))} = \frac{\Gamma(d/2)}{2\pi^{d/2}}
    \]
    is the density of $Y$ corresponding to the uniform distribution on $\mb S^{d-1}$. 
    Hence, it suffices to bound the ratio $\frac{\vol(U(x,r)) }{\vol(V(x,r))}$. For small $r$, the volume measure on the sphere can be approximated by the Lebesgue measure on $\mb R^{d-1}$, implying that
    \[
    \vol(U(x,r)) = (1 + o(1))\frac{\pi^{(d-1)/2}r^{d-1}}{\Gamma\l(\frac{d+1}{2}\r)} \text{ as } r\to 0.
    \]
   On the other hand, note that
    \begin{align*}
        \l\|\frac{z}{\|z\|_{\infty}}-\frac{x}{\|x\|_{\infty}} \r\|_{\infty}\leq \frac{\|z-x\|_2}{\|z\|_{\infty}}+\frac{\|x\|_{\infty}\|x-z\|_2}{\|x\|_{\infty}\|z\|_{\infty}}\leq 2\sqrt{d}\eps,
    \end{align*}
    where we used the fact that $\|z\|_\infty\geq d^{-1/2}$ for $z\in \mb S^{d-1}$.
    Therefore,  
    \[
    V(x,r)\subseteq \l\{z\in\partial[-1,1]^d, \l\|z-\frac{x}{\|x\|_{\infty}}\r\|_{\infty}\leq 2\sqrt{d}r\r\},
    \]
    implying that 
    \begin{equation}
        \label{eq:volume:bound1}
    \vol(V(x,r))\leq (2\sqrt{d}r)^{d-1}.
    \end{equation}
    Next, if $x',z\in \partial[-1,1]^d$ are such that $\|z-x'\|_2\leq r/2$, then 
    \[
    \l\| \frac{z}{\|z\|_2} - \frac{x'}{\|x'\|_2}\r\|_2\leq r,
    \]
    hence 
    \[
    V(x,r)\supseteq \l\{z\in\partial[-1,1]^d, \l\|z-\frac{x}{\|x\|_{\infty}}\r\|_2\leq \frac{r}{2}\r\}
    \]
    and 
    \begin{equation}
        \label{eq:volume:bound2}
    \vol(V(x,r))\geq \frac{\pi^{(d-1)/2}r^{d-1}}{2^{d-1}\Gamma\l(\frac{d+1}{2}\r)}
    \end{equation}
    for sufficiently small $r>0$. It remains to combine inequalities \eqref{eq:volume:bound1}, \eqref{eq:volume:bound2} with \eqref{eq:density:bound:1} conclude that
    \begin{align*}
         \frac{\Gamma(d/2)}{\Gamma\l(\frac{d+1}{2}\r)}\frac{1}{2^d\sqrt{\pi} d^{(d-1)/2}}
         \leq h_Z\l(\frac{x}{\|x\|_\infty}\r) 
         \leq \frac{\Gamma(d/2)2^{d-1}}{2\pi^{d/2}},
    \end{align*}
    The lower bounds can be simplified using the bound $\frac{\Gamma(d/2)}{\Gamma\l( \frac{d+1}{2}\r)}\geq (d+1)^{-1/2}\geq \frac{1}{2\sqrt{d}}$ that follows from inequalities due to \cite{gautschi1959some}, implying the final form of the bound.

\subsection{Auxiliary results.}
\label{sec:auxiliary}

Recall the definition \eqref{def:W(z)} of the process $W(z)$. Moreover, let 
\begin{align*}
\wh z &\in \arg\max_{z\in \mb R^d} W(z), 
\\
\mathcal{U}&:=\mathcal{U}(\wh z) = \arg\min_{v:\|v\|_2=1} \l\{G(v)-\frac{1}{\sqrt{2\pi}} \langle v,\wh z\rangle\r\},
\\
B(\wh{z},\alpha)&:=\l\{z:\; \|z-\wh{z}\|_2\leq \alpha\r\}, 
\\
\mathcal{U}(\alpha)&:=\mathcal{U}(\wh z,\alpha) = \l\{v\in\mathbb{S}^{d-1}:\; G(v)-\frac{1}{\sqrt{2\pi}} \langle v, z\rangle \leq W(\wh z)\text{ for some } z\in B(\wh{z},\alpha)\r\}.
\end{align*}.
\begin{lemma}
\label{lemma:auxiliary}
The following statements hold: 
\begin{enumerate}
\item[(i)] With probability $1$, $\wh z$ is unique. Moreover, there exists event $\m E$ of probability at least $1-e^{-t}$ such that on this event, $\|\wh z\|_2\leq C\sqrt{d+t}$;
\item[(ii)] With probability $1$, for any $w\in \mb R^d$, $w\ne 0$, there exists $u\in \mathcal{U}$ such that $u^{\top}w\leq 0$. Moreover, if $d=2$, then the inequality can be strengthened to $\sup\limits_{w\ne 0}\min\limits_{u\in \m U} u^{\top}\frac{w}{\|w\|_2} < 0$. 
\item[(iii)] With probability $1$, for any $\alpha>0$
\[
\min_{v\in\mathbb{S}^{d-1}}\max_{u\in\mathcal{U}(\alpha)}v^{\top}u>0.
\]
Moreover, for any $z\in\mathbb{R}^d$, with probability $1$ there exists $u\in\mathcal{U}(\alpha)$ and $z_0\in B(\wh z,\alpha)$ such that 
\[
\frac{u^{\top}(z-z_0)}{\|z-z_0\|_2}> 0 \text{ and } G(u)-\frac{1}{\sqrt{2\pi}}\langle u,z_0\rangle\leq W(\wh{z}).
\]
\end{enumerate}
\end{lemma}
\begin{proof}
\textbf{(i)} Lemma 3.8 in \cite{masse2002asymptotics} states that with probability 1, $\arg\max_{z\in \mb R^d}W(z)$ is unique. To establish the bound on $\|\wh z\|_2$, note that in view of the Gaussian concentration inequality \citep{cirel1976norms}, 
\[
\inf_{\|v\|_2=1}G(v)\geq \mb E \inf_{\|v\|_2=1}G(v) - \sqrt{2t}\sup_{\|v\|_2=1}\mathrm{var}^{1/2}(G(v))
\]
with probability at least $1-e^{-t}$. 
A standard argument\footnote{For example, it suffices to combine Theorem 2.37 with Theorem 4.47 or 4.52 in \citep{dudley2014uniform}.} based on Dudley's entropy integral bound \citep{dudley1967sizes} and the fact that the Vapnik-Chervonenkis dimension of the class of halfspaces passing through the origin equals $d$ implies that 
\[
\mb E \inf_{\|v\|_2=1}G(v)\geq -C_1\sqrt{d}.
\]
Since $\mathrm{var}(G(v))=1/4$ for all $v$, we deduce that 
\[
\inf_{\|v\|_2=1}G(v)\geq -C_2\sqrt{d+t}
\]
on event $\m E$ of probability at least $1-e^{-t}$. We conclude that on $\m E$,
\[
W(0) = \inf_{\|v\|_2=1}G(v) \geq -C_2\sqrt{d+t}.
\]
Moreover, for $\|z\|_2> C\sqrt{d+t}$, we have
\[
W(z)\leq -\frac{1}{\sqrt{2\pi}}\|z\|_2 +\sup_{\|v\|_2=1}G(v) < C_2 \sqrt{d+t},
\]
whenever $C$ is sufficiently large. Therefore, $\|\wh z\|_2 \leq C\sqrt{d+t}$ on $\m E$. 

\textbf{(ii)} The first claim is established in the course of the proof of Lemma 3.8 in \cite{masse2002asymptotics}, also see \cite{nolan1999min}. 
Assume now that $d=2$. \cite{masse2002asymptotics} and \cite{nolan1999min} show that the set $\m U$ must contain at least 3 elements. Trajectories of the process $W(z)$ are concave with probability $1$ as the pointwise infimum of linear functions, moreover, the subdifferential $\partial W(\wh z)$ is the convex hull $\mathrm{co}(\m U)$ of the set $\m U$ \citep{boyd2004convex}. The necessary condition for $\wh z$ to be the maximizer of $W(z)$ is $0\in \partial W(\wh z)$. If $0$ is on the boundary of $\mathrm{co}(\m U)$, then there exists a chord passing through $0$ contained in $\mathrm{co}(\m U)$, implying that there exists a unit vector $\hat v$ such that $\l\{ \hat v,-\hat v \r\}\subset \m U$ (see Figure \ref{figure:subdifferential2}). But in this case 
\begin{equation}
\label{eq:chord}
M(\wh z,u):=G(u) - \frac{1}{\sqrt{2\pi}} \langle \wh z, u\rangle = 0  
\end{equation}
for all unit vectors $u$, in which case $\m U = S^{2}$ and the claim readily holds. To verify \eqref{eq:chord}, observe that, since $G(-u) = -G(u)$ with probability $1$, $M(z,-u) = -M(z,u)$ for all $z,u$. Therefore,
\[
W(\wh z) = M(\wh z,\hat v) = \inf_{\|u\|_2=1} M(\wh z,u)\leq \sup_{\|u\|_2=1} M(\wh z,u) = M(\wh z,-\hat v) = W(\wh z),
\]
implying the claim. 
\begin{figure}[t]
    \centering
   \begin{subfigure}[b]{0.45\textwidth}
      \centering
       \includegraphics[width=0.7\textwidth]{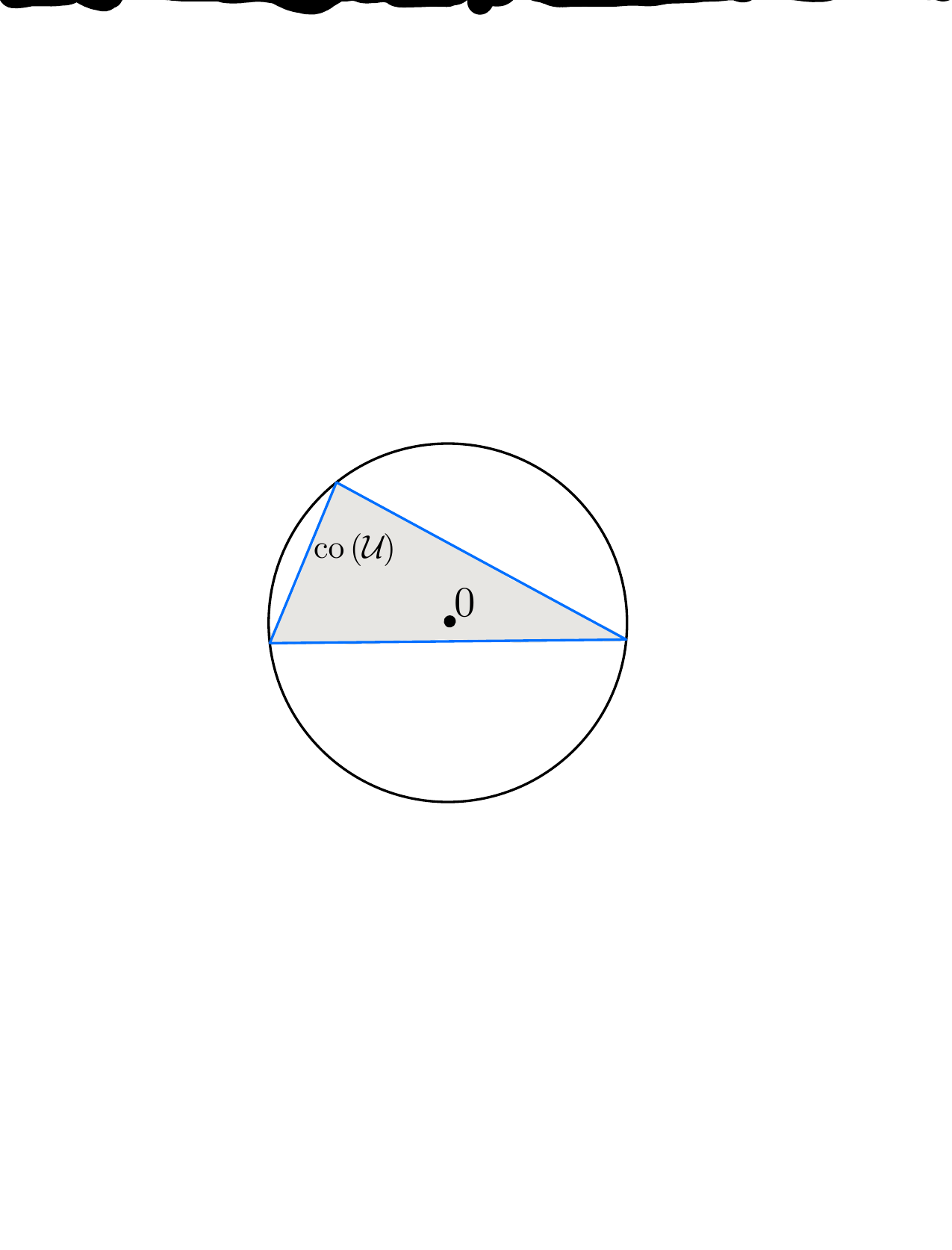}     
       \caption{Depiction of the case when $0$ is in the interior of $\mathrm{co}(\m U)$.}
         \label{figure:subdifferential1}
    \end{subfigure}
    \begin{subfigure}[b]{0.45\textwidth}
        \centering
        \includegraphics[width=0.7\textwidth]{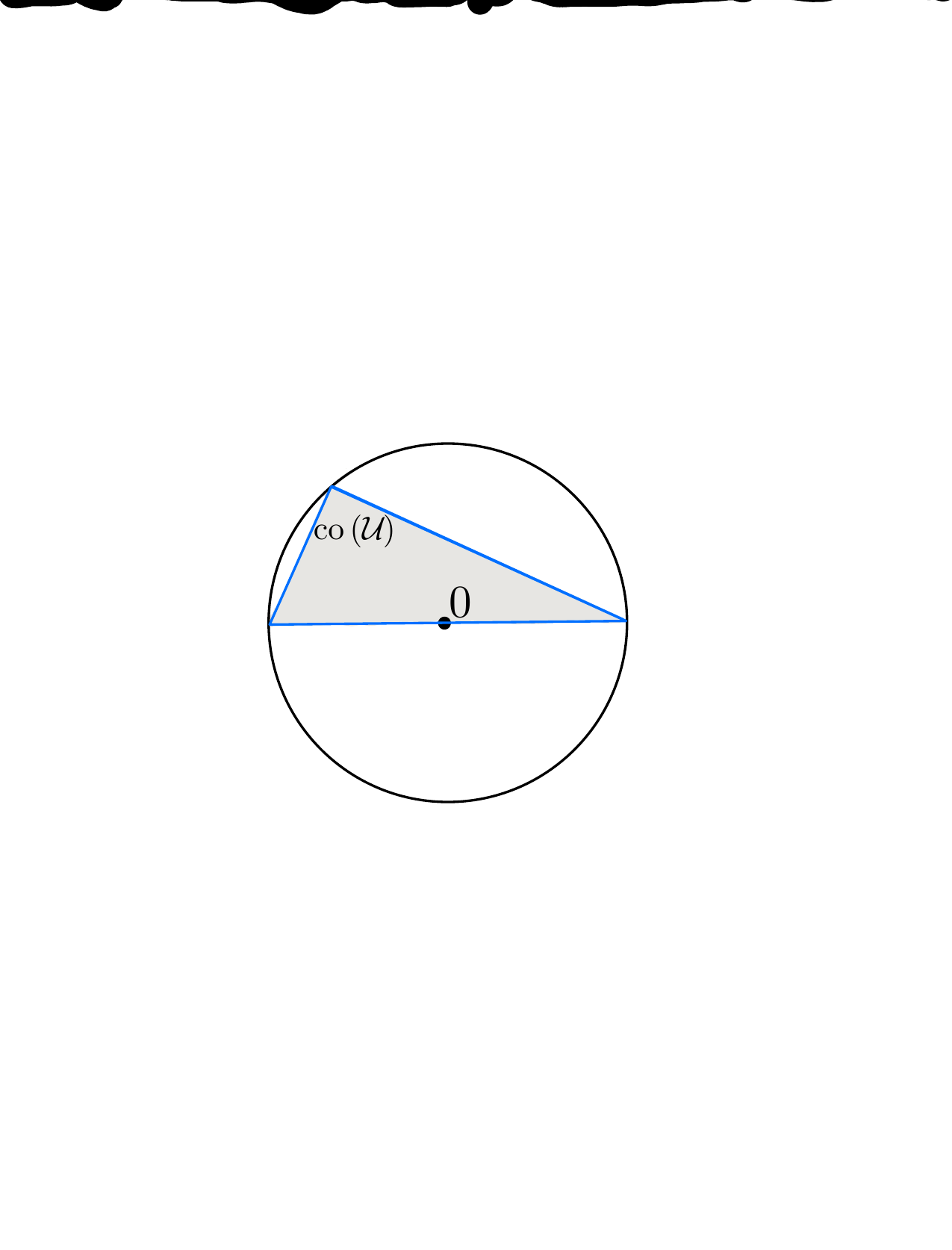}   
        \caption{Depiction of the case when $0$ is on the boundary of $\mathrm{co}(\m U)$.}
        \label{figure:subdifferential2}
   \end{subfigure}
   \caption{\mbox{Subdifferential of $W(\wh z)$.}}
  \label{figure:subdifferential}
 \end{figure}

\noindent Finally, when $0$ is in the interior of $\mathrm{co}(\m U)$ (Figure \ref{figure:subdifferential1}), then 
\[
\sup_{w\ne 0}\min\limits_{u\in \m U} u^{\top}\frac{w}{\|w\|_2} = -\mathrm{dist}\l(0,\mathrm{bd}(\mathrm{co}(\m U))\r) < 0
\] 
where $\mathrm{dist}(x,A)$ is the distance from a point $x$ to a set $A$ and $\mathrm{bd}(\mathrm{co}(\m U))$ is the boundary of $\mathrm{co}(\m U)$. 

\textbf{(iii)} We only need to consider the case $d\geq 3$, since the case $d=2$ follows from part \textbf{(ii)} of the lemma. It has been shown above that for $\alpha=0$, 
\[
\mathcal{U}(0)=\{\wh{z}\} \text{ and } \min_{v\in\mathbb{S}^{d-1}}\max_{u\in\mathcal{U}(0)}v^{\top}u\geq 0.
\]
Therefore, it suffices to show that for any $v\in\mathbb{R}^{d-1}$ such that $\max_{u\in\mathcal{U}(0)}v^{\top}u= 0$, there exists $w\in\mathcal{U}(\alpha)$ satisfying $w^{\top}v>0$. Consider the point $\wh{z}+\frac{1}{2}\alpha \cdot v\in B(\wh{z},\alpha)$. As $\wh{z}$ is unique, we have 
\[
W(\wh{z})>W\l(\wh{z}+\frac{\alpha}{2}v\r).
\]
However, for any $w\in\mathbb{S}^{d-1}$ such that $w^{\top}v\leq 0$, we have 
\[
G(w)-\frac{1}{\sqrt{2\pi}} \l\langle w, \wh{z}+\frac{\alpha}{2}v\r\rangle
\geq G(w)-\frac{1}{\sqrt{2\pi}} \langle w, \wh{z}\rangle
\geq W(\wh{z}). 
\]
Thus there exists $w\in\mathbb{R}^{d-1}$ satisfying $w^{\top}v>0$ such that 
\[
\l\{G(w)-\frac{1}{\sqrt{2\pi}} \langle w, \wh{z}+\frac{\alpha}{2}v\rangle\r\}<W(\wh{z}).
\]
We conclude that $w\in \mathcal{U}(\alpha)$ and $w^{\top}v>0$. 
To prove the second claim of part \textbf{(iii)}, it suffices to apply the preceding argument to $v=\frac{z-\wh{z}}{\|z-\wh{z}\|}$.
\end{proof}

\end{document}